% !TeX document-id = {6e25e843-e61c-4fc0-aea6-0b58d99a6580}

% !TEX encoding = UTF-8 Unicode

%\documentclass[10pt,journal,onecolumn]{IEEEtran}
\documentclass{amsart}
\usepackage{t1enc,amsfonts,commath,latexsym,amsmath,amsthm,verbatim,amssymb,graphics,mathdots,mathtools,pstool,color,wrapfig, enumitem}

\usepackage{cite}

\renewcommand{\Re}[1]{\mathsf{Re}(#1)}
\renewcommand{\Im}{\mathsf{Im}}

\newcommand{\R}{\mathbb{R}}
\newcommand{\C}{\mathbb{C}}

\newcommand{\HW}[1]{{\color{blue}#1}}

\def\BibTeX{{\rm B\kern-.05em{\sc i\kern-.025em b}\kern-.08em
		T\kern-.1667em\lower.7ex\hbox{E}\kern-.125emX}}

\newtheorem{theorem}{Theorem}[section]
\newtheorem{proposition}[theorem]{Proposition}
\newtheorem{lemma}[theorem]{Lemma}

\title{On joint eigen-decomposition of matrices}

%\author{ \thanks{,  { }}}

\author[Troedsson et. al.]{Erik Troedsson$^{1}$, Daniel Falkowski$^1$, Carl-Fredrik Lidgren$^1$, Herwig Wendt$^2$, Marcus Carlsson$^{1}$}
\address{\noindent$~^1$Centre for Mathematical Sciences , Lund University, Lund,  Sweden.\newline
\indent$~^2$CNRS, IRIT, Universit\'{e} de Toulouse, Toulouse, France.}
\email{erik.troedsson@math.lu.se, marcus.carlsson@math.lu.se,\newline \indent herwig.wendt@irit.fr}

\begin{document}
\maketitle
\begin{abstract}
The problem of approximate joint diagonalization of a collection of matrices arises in a number of diverse engineering and signal processing problems. This problem is usually cast as an optimization problem, and it is the main goal of this publication to provide a theoretical study of the corresponding cost-functional. As our main result, we prove that this functional tends to infinity in the vicinity of rank-deficient matrices with probability one, thereby proving that the optimization problem is well posed. Secondly, we provide unified expressions for its higher-order derivatives in multilinear form, and explicit expressions for the gradient and the Hessian of the functional in standard form, thereby opening for new improved numerical schemes for the solution of the joint diagonalization problem. A special section is devoted to the important case of self-adjoint matrices.
\end{abstract}
\section{Introduction}
Let $\mathbb{M}(n,\mathbb{F})$ denote the set of $n\times n$ matrices, where $\mathbb{F}$ equals $\mathbb{R}$ or $\mathbb{C}$, and let $\mathcal{A} = \{A_1,\ldots,A_K\}$ be a collection of diagonalizable matrices. These matrices share a joint set of eigenvectors if and only if there exists an invertible matrix $Q$ such that $Q^{-1}A_kQ$ is a diagonal matrix, for each $k=1,\ldots,K$. In various applications there arises the situation that the matrices in $\mathcal{A}$ are jointly diagonalizable in theory, but not in practice due to noise and other imperfections, and it is then of interest to find the best matrix $Q$ that almost jointly diagonalize $\mathcal{A}$. This problem arises for instance in blind source separation \cite{comon2010handbook}, common principal component problem (CPC) \cite{flury1984common}, canonical polyadic decomposition of tensors \cite{luciani2014canonical}, multidimensional frequency estimation \cite{sahnoun2017multidimensional,andersson2018esprit} and the ``direction of arrival'' problem in three dimensions \cite{andersson2015method}.

To make the problem formulation rigorous, most prior works on the topic, such as \cite{andre2020joint,andre2023sparsity,cardoso1993blind,gong2012complex,iferroudjene2009new,luciani2014canonical,mesloub2018efficient}, introduce the cost-functional
\begin{equation}\label{functional}
    f_\mathcal{A}(Q)=\frac{1}{2}\sum_{k=1}^K\sum_{i\neq j} \left|(Q^{-1}A_kQ)_{ij}\right|^2 
\end{equation}
defined on $GL(n,\mathbb{F})$, the subset of invertible matrices. Clearly, $f(Q)=0$ if and only if $Q$ diagonalizes all matrices $A_k$ simultaneously. 

Despite its common usage, $f_\mathcal{A}$ has not been studied deeply on its own and many important properties are still unknown. So far, the main theoretical contribution that can be found in the literature is a computation of the gradient of \eqref{functional}, see \cite{hori1999joint}. 
Many interesting questions of large practical importance remain open, including the expressions for the Hessian and higher order derivatives, the local geometry of the functional, whether it has rank-deficient local minimizers, or under what conditions there is a unique global minimizer.

This work provides answers to some of these questions. 
The main achievements are, first, a theorem that shows that in general the functional will tend to infinity in the vicinity of all rank-deficient points, and second, unified expressions for the higher order derivatives of $f_\mathcal{A}$ along with expressions  in standard form the first and second derivative (gradient and Hessian). The former ensures that suitably engineered optimization routines (i.e., descent algorithms) cannot diverge, or cannot converge to rank-deficient points, whereas the latter are useful results for the construction of new descent based optimization routines, such as (conjugate) gradient descent or (quasi)-Newton algorithms (some of which are studied separately elsewhere, see e.g.~\cite{ctw2024joint}).

%In certain applications, such as blind source separation \cite{comon2010handbook}, common principal component problem (CPC) \cite{flury1984common}, canonical polyadic decomposition of tensors \cite{luciani2014canonical}, the matrices in $\mathcal{A}$ are self-adjoint. In this case it is natural to only consider unitary matrices $Q$, i.e.~such that $Q^*=Q^{-1}$. The collection of such matrices is known as the unitary group (orthogonal group in the real case), which we shall denote by $\mathbb{U}(n,\mathbb{F})$. We therefore introduce $g_{\mathcal{A}}:\mathbb{U}(n,\mathbb{F})\rightarrow [0,\infty)$ by restricting $f_{\mathcal{A}}$ to $\mathbb{U}(n,\mathbb{F})$, and note that this is given by the simpler expression \begin{equation}\label{functionalSA}
%g_{\mathcal{A}}(Q)=\frac{1}{2}\sum_{k=1}^K\sum_{i\neq j} \left|(Q^{*}A_kQ)_{ij}\right|^2. 
%\end{equation} 
%In short, all key results of this paper extend easily to this setting as well.

The paper is organized as follows: In Section \ref{sec2} we prove the main theorem about behavior near rank deficient points, and in Section \ref{sec3} we prove that this theorem applies in a concrete setting with probability one, given that the matrix entries in $\mathcal{A}$ are drawn from an absolutely continuous distribution. In Section \ref{sec4} we prove a general formula for all higher order derivatives, expressed as multi-linear forms, and in Sections \ref{sec5} and \ref{sec6} we use this to derive more concrete representations of the gradient and Hessian. Finally, in Section \ref{sec7}, we extend our key results to the special case of self-adjoint matrices, which is of importance in certain applications, and briefly discuss particularities thereof.

\section{Behaviour near rank-deficient points}\label{sec2}

Despite its common usage as measure of joint diagonality, there seems to exist no study of the basic properties of $f_{\mathcal{A}}$. Indeed, assuming that one has an algorithm designed to find a local minimizer of $f_{\mathcal{A}}$ (or some proxy thereof), the first question that comes to mind is if the formulation is mathematically sound, or whether it is possible that such an algorithm will converge to a rank-deficient matrix. In this section we show that, under some mild conditions on the matrices $\mathcal{A}$, the functional $f_\mathcal{A}(Q)$ tends to infinity whenever the argument converges to a rank-deficient matrix.

Recall that a (linear) subspace $\mathcal{M}$ of $\mathbb{F}^n$ is invariant for a matrix $A$ if $A(\mathcal{M})\subseteq\mathcal{M}$. For diagonalizable matrices, the invariant subspaces are precisely the subspaces that are spanned by any subset of eigenvectors. Naturally, two or more matrices are said to have a common invariant subspace if there is a subspace that is invariant for each of the matrices in question. The probability that two or more matrices have a 
%joint subspace 
common invariant subspace
is zero, as we will show in the next section, but first let us state and prove our main theorem.

\begin{theorem}\label{diverg}
    We have that
    $\lim_{Q\to Z}f_{\mathcal{A}}(Q) = \infty$ 
    for all fixed nonzero rank-deficient matrices $Z$, if and only if the matrices in $\mathcal{A}$ have no common nontrivial invariant subspace.
\end{theorem}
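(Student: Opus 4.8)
The plan is to establish the equivalence by contraposition in each direction; one direction is an explicit construction, the other a compactness argument whose crux is an a-priori bound.

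\textbf{If there is a common invariant subspace, $f_{\mathcal{A}}$ need not blow up.} I would prove the ``only if'' direction contrapositively. Suppose $\mathcal{A}$ has a common nontrivial invariant subspace $\mathcal{M}$, and set $m := \dim\mathcal{M}$, so $1 \le m \le n-1$. Choose an invertible $Q_0$ whose first $m$ columns span $\mathcal{M}$; then each $Q_0^{-1}A_kQ_0$ is block upper-triangular with a vanishing lower-left $(n-m)\times m$ block. Conjugating by the diagonal matrix $D_t := \diag(1,\dots,1,t,\dots,t)$ (with $t$ in the last $n-m$ entries) fixes the two diagonal blocks and only scales the upper-right block by $t$, so $D_t^{-1}(Q_0^{-1}A_kQ_0)D_t$ stays bounded, and in fact converges, as $t\to 0$. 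Hence $f_{\mathcal{A}}(Q_0D_t)$ is bounded along the path $Q_0D_t \to Z := Q_0\diag(I_m,0)$, and since $Z$ is nonzero and rank-deficient, the assertion $\lim_{Q\to Z}f_{\mathcal{A}}(Q)=\infty$ fails.

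\textbf{Core a-priori bound.} For the ``if'' direction I would again argue by contraposition: assume $\lim_{Q\to Z}f_{\mathcal{A}}(Q)\ne\infty$ for some nonzero rank-deficient $Z$, and pick invertible $Q_\ell\to Z$ with $f_{\mathcal{A}}(Q_\ell)$ bounded, so that the off-diagonal part of each $B_k^{(\ell)} := Q_\ell^{-1}A_kQ_\ell$ is bounded in Frobenius norm. The main obstacle is to upgrade this to a bound on $B_k^{(\ell)}$ itself: since conjugation preserves eigenvalues, $\operatorname{tr}\big((B_k^{(\ell)})^m\big)=\operatorname{tr}(A_k^m)$ is bounded for every $m$, yet a priori the diagonal entries could still diverge with cancellation in these traces. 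To rule this out I would suppose $\delta_\ell := \max_i |(B_k^{(\ell)})_{ii}|\to\infty$ along a subsequence and examine $\widehat{B}_\ell := B_k^{(\ell)}/\delta_\ell$: its off-diagonal part is $O(1/\delta_\ell)\to 0$, while its diagonal entries lie in the unit disk with one of modulus exactly $1$. A convergent subsequence of $\widehat{B}_\ell$ then tends to a \emph{diagonal} matrix $\widehat{B}$ which is nonzero (an entry of modulus $1$ persists in the limit) yet satisfies $\operatorname{tr}(\widehat{B}^{\,m}) = \lim_\ell \operatorname{tr}(A_k^m)/\delta_\ell^{\,m}=0$ for every $m\ge 1$; a diagonal matrix all of whose power-traces vanish is $0$, a contradiction. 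Hence each $B_k^{(\ell)}$ is bounded.

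\textbf{Extracting the subspace.} Passing to a further subsequence along which $B_k^{(\ell)}\to B_k$ for every $k$ simultaneously, I would let $\ell\to\infty$ in the identity $Q_\ell B_k^{(\ell)} = A_kQ_\ell$ to obtain $ZB_k = A_kZ$, and therefore $A_k(\operatorname{ran}Z) = \operatorname{ran}(ZB_k)\subseteq\operatorname{ran}Z$ for all $k$. Since $Z\ne 0$ and $\operatorname{rank}Z<n$, the subspace $\operatorname{ran}Z$ is nontrivial, so it is a common nontrivial invariant subspace for $\mathcal{A}$, contradicting the hypothesis and completing the contrapositive. I expect every step here to be routine except the a-priori bound in the middle, which carries the real content.
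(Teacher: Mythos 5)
Your proof is correct, and it takes a genuinely different route from the paper. The paper factors the argument through two standalone lemmas: a quantitative comparison $\|A\| \le \sqrt{n}\big(\max_{\lambda\in\sigma(A)}|\lambda| + 2n\|J\diamond A\|\big)$ proved via the Ger\v{s}gorin circle theorem, and an SVD-based characterization of exactly when $\|Q^{-1}AQ\|\to\infty$ as $Q\to Z$, namely when $A(\mathrm{Im}\,Z)\nsubseteq\mathrm{Im}\,Z$. You replace the Ger\v{s}gorin estimate by the rescaling-plus-power-traces argument: if the off-diagonal is bounded but some diagonal entry blows up at rate $\delta_\ell$, then $B_k^{(\ell)}/\delta_\ell$ subconverges to a nonzero diagonal matrix whose power sums $\mathrm{tr}(\widehat B^m)=\lim_\ell\mathrm{tr}(A_k^m)/\delta_\ell^m$ all vanish, forcing it to be zero (Newton's identities/Vandermonde) -- a contradiction. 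You then extract the invariant subspace directly from the algebraic limit $ZB_k = A_kZ$, bypassing the SVD block decomposition, and your $Q_0D_t$ construction for the converse is a lighter, coordinate-based version of the paper's SVD construction. The trade-off: the paper's Ger\v{s}gorin lemma gives an explicit quantitative inequality between $\|A\|$ and $\|J\diamond A\|$, and its Lemma on $\|Q^{-1}AQ\|$ is a clean self-contained statement, both potentially reusable; your argument is more streamlined for the theorem itself but yields only the qualitative boundedness and does not isolate either ingredient as a lemma. One small point worth making explicit in a write-up: when you pass to the subsequence where $\widehat B_\ell\to\widehat B$, you should note that the index achieving $\max_i|(B_k^{(\ell)})_{ii}|$ can be taken constant along the subsequence, which is what guarantees the surviving diagonal entry of modulus one in the limit -- you gesture at this but it deserves a word.
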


Let $I$ denote the identity matrix and $J$ the matrix with zeros on the diagonal and ones everywhere else. Also let $X\diamond Y$ denote the pointwise multiplication of two matrices $X$ and $Y$, also known as Hadamard multiplication. We can then express \eqref{functional} more compactly as
\begin{equation}
	f_\mathcal{A}(Q)=\frac{1}{2}\sum_{k=1}^K \|J\diamond (Q^{-1}A_k Q)\|^2,
\end{equation}
where the norm is the Frobenius norm.
To prove Theorem \ref{diverg}, we begin by noting a comparison between the norm $\|\cdot\|$ and the {off-}diagonal measure $\|J\diamond \cdot\|$ appearing in $f_\mathcal{A}$.

\begin{lemma}\label{lem1}
	Let $A\in \mathbb{M}(n,\mathbb{C})$. Then
	\begin{equation}
		\|J\diamond A\| \leq \|A\| \leq \sqrt{n}(\max_{\lambda\in\sigma(A)} |\lambda|+2n\|J\diamond A\|),
	\end{equation}
	where $\sigma(A)$ is the set of eigenvalues of $A$.
\end{lemma}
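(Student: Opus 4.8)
The first inequality is immediate: since $J_{ij}=1$ for $i\neq j$ and $0$ otherwise, $\|J\diamond A\|^2=\sum_{i\neq j}|A_{ij}|^2\le\sum_{i,j}|A_{ij}|^2=\|A\|^2$. For the second inequality set $\rho:=\max_{\lambda\in\sigma(A)}|\lambda|$ and $N:=J\diamond A$. One natural attempt is to use $A=\diag(A)+N$ together with $\|A\|\le\sqrt n\,\|A\|_2\le\sqrt n(\max_i|A_{ii}|+\|N\|_2)$ and then estimate $\max_i|A_{ii}|$; this can be made to work, but getting the sharp coefficient on $\rho$ this way seems delicate. Instead the plan is to pass to a Schur triangularization $A=U(\Lambda+R)U^*$, with $U$ unitary, $\Lambda=\diag(\lambda_1,\dots,\lambda_n)$, and $R$ strictly upper triangular. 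Since $\|\cdot\|$ is unitarily invariant and $\Lambda,R$ are Frobenius-orthogonal, $\|A\|^2=\sum_i|\lambda_i|^2+\|R\|^2\le n\rho^2+\|R\|^2$, so everything reduces to bounding the departure from normality $\|R\|$ in terms of $\rho$ and $\|N\|$.

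The crucial observation is that the diagonal part $D:=\diag(A)$ is normal, so the self-commutator of $A$ only ``sees'' $N$. Indeed $[D,D^*]=0$, hence $AA^*-A^*A=[D,N^*]+[N,D^*]+[N,N^*]$; since $[D,N^*]_{ij}=(A_{ii}-A_{jj})\overline{N_{ji}}$ one gets $\|[D,N^*]\|=\|[N,D^*]\|\le 2m\|N\|$ with $m:=\max_i|A_{ii}|$, while submultiplicativity of the Frobenius norm gives $\|[N,N^*]\|\le 2\|N\|^2$, so $\|AA^*-A^*A\|\le 4m\|N\|+2\|N\|^2$. Feeding this into Henrici's classical bound on the departure from normality, $\|R\|^2\le c_n\|AA^*-A^*A\|$ with $c_n=\sqrt{(n^3-n)/12}$, yields $\|R\|^2\le c_n(4m\|N\|+2\|N\|^2)$.

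This is circular because $m\le\|D\|$ and $\|D\|^2=\|A\|^2-\|N\|^2\le n\rho^2+\|R\|^2$, so I would close the loop by a bootstrap: using $m\le(n\rho^2+\|R\|^2)^{1/2}\le\sqrt n\,\rho+\|R\|$ turns the previous estimate into a quadratic inequality for $\|R\|$, and solving it and substituting back into $\|A\|^2\le n\rho^2+\|R\|^2$ gives $\|A\|^2\le n\rho^2+8c_n\sqrt n\,\rho\|N\|+(16c_n^2+4c_n)\|N\|^2$. Since $c_n\le n^{3/2}/2$, the right-hand side is at most $n\rho^2+4n^2\rho\|N\|+4n^3\|N\|^2=n(\rho+2n\|N\|)^2$, which is exactly the assertion. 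The main obstacle is precisely this point — controlling the departure from normality by the off-diagonal mass $\|J\diamond A\|$: the trivial bound $\|R\|\le\|A\|$ is useless, and what rescues the argument is the normality of $\diag(A)$ together with the bootstrap; the remaining work is only bookkeeping with Henrici's constant so that the final bound lands precisely on $\sqrt n(\rho+2n\|J\diamond A\|)$. (If one prefers not to quote Henrici's inequality, the same departure estimate can be extracted directly from the Schur form, but the argument is essentially the same.)
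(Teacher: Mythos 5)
Your proof is correct in its essentials but follows a genuinely different route from the paper's. The paper's argument is elementary: it applies the Ger\v{s}gorin circle theorem to place an eigenvalue near each diagonal entry $a_{kk}$, giving $|a_{kk}|\le \max|\lambda|+2\sqrt{n(n-1)}\,\|J\diamond A\|$ directly, and then sums squares. Your argument instead passes to a Schur form $A=U(\Lambda+R)U^*$, reduces the problem to bounding the departure from normality $\|R\|$, and invokes Henrici's inequality $\|R\|^2\le\sqrt{(n^3-n)/12}\,\|[A,A^*]\|$ together with the observation that $[D,D^*]=0$ kills the diagonal contribution to the self-commutator; a bootstrap then closes the circular dependence of $m=\max_i|a_{ii}|$ on $\|R\|$. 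Both strategies rest on controlling the diagonal entries by the spectrum plus the off-diagonal mass, but the paper's is self-contained and short, whereas yours imports a nontrivial external inequality and pays for it with a bootstrap; the payoff of your route is conceptual — it isolates the departure from normality as the governing quantity — and it would generalize to sharper constants if one tracked them. One small slip: asserting only $c_n\le n^{3/2}/2$ is not quite enough to absorb the $4c_n\|N\|^2$ term into $4n^3\|N\|^2$, since $16c_n^2+4c_n\le 4n^3+2n^{3/2}$ overshoots; you need the actual value $c_n=\sqrt{(n^3-n)/12}\le n^{3/2}/(2\sqrt3)$, under which $16c_n^2+4c_n\le \tfrac{4}{3}n^3+\tfrac{2}{\sqrt3}n^{3/2}\le 4n^3$ for all $n\ge1$, and then the stated bound $\|A\|\le\sqrt n(\rho+2n\|J\diamond A\|)$ does follow.
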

\begin{proof}
	The inequality $\|J\diamond A\| \leq \|A\|$ is obvious. To prove the second one, define the ``Ger\v{s}gorin disks'',
	$$D_i \coloneqq D\big(a_{ii}, \sum_{j\neq i} |a_{ij}|\big) = \{z\in\mathbb{C}; |a_{ii}-z| \leq \sum_{j\neq i} |a_{ij}|\},$$
where $D(x,r)$ denotes the closed disk with center $x$ and radius $r$.
	By the Ger\v{s}gorin circle theorem, we know that $\sigma(A) \subset \cup_{i=1}^n D_i$ and that each connected component of $\cup_{i=1}^n D_i$ contains at least one eigenvalue $\lambda$. 
	
	Consider the diagonal element $a_{kk}$ and the connected region $\Omega_k\subseteq\cup_{i=1}^n D_i$ that contains it. This region is a union of disks, $\Omega_k=\cup_{i\in I} D_i$ for some index set $I\subseteq \{1,2,\ldots,n\}$ with $k\in I$. Since these disks are connected, they are all contained in the larger disk centered at $a_{kk}$ of radius $2\sum_{j\in I} \sum_{i\neq j} |a_{ij}| $. Let $\lambda$ be an eigenvalue in $\Omega_k$ and note that $$|a_{kk}|\leq |\lambda|+|a_{kk}-\lambda|\leq |\lambda|+2\sum_{j=1}^n \sum_{i\neq j} |a_{ij}|\leq \max_{\lambda\in\sigma(A)}|\lambda|+2\sqrt{n(n-1)}\|J\diamond A\|,$$ where we used the Cauchy-Schwarz inequality in the final estimate.
Thus \begin{align*}
 & \|A\|^2=\sum_{i,j}|a_{ij}|^2 \leq \|J\diamond A\|^2+\sum_k|a_{kk}|^2\leq\\
       & \|J\diamond A\|^2+n\left(\max_{\lambda\in\sigma(A)}|\lambda|+2\sqrt{n(n-1)}\|J\diamond A\|\right)^2\leq\\& \|J\diamond A\|^2+ n\left((\max_{\lambda\in\sigma(A)}|\lambda|)^2+4n(\max_{\lambda\in\sigma(A)}|\lambda|)\|J\diamond A\|+4(n^2-n)\|J\diamond A\|^2\right)\leq \\n&\left(\max_{\lambda\in\sigma(A)}|\lambda|+2 n^2\|J\diamond A\|\right)^2
     \end{align*}
 The desired inequality follows upon taking the square root of the last line.
\end{proof}

We shall also need the following result, which explains when the norm of a similarity transformation $Q^{-1}AQ$ diverges.
\begin{lemma}\label{lem2}
		Let $ {A}, {Z}$ be $n\times n$ matrices such that $ {Z}\neq 0$ is rank-deficient. Then $$\underset{Q\in GL(n,\mathbb{C})}{\underset{Q\rightarrow Z}{\lim}} \|Q^{-1} A Q\| = \infty$$ if and only if
		$A(\text{Im}(Z))\nsubseteq \text{Im}(Z)$. 
\end{lemma}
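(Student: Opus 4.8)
The plan is to prove the two implications separately, handling the ``only if'' direction by contraposition.

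For the ``if'' direction, suppose $A(\text{Im}(Z))\nsubseteq\text{Im}(Z)$, so that there is a nonzero vector $u\in\text{Im}(Z)$ with $Au\notin\text{Im}(Z)$; write $u=Zx$ with $x\neq 0$. I would then reduce the matrix statement to a single vector: since $\|Q^{-1}AQ\|\geq\|Q^{-1}AQx\|/\|x\|$ for the Frobenius norm (which dominates the operator norm), it suffices to show $\|Q^{-1}AQx\|\to\infty$ as $Q\to Z$ in $GL(n,\mathbb{C})$. Argue by contradiction: if along some sequence $Q_m\to Z$ the vectors $Q_m^{-1}AQ_mx$ remained bounded, pass to a subsequence converging to some $y$; then continuity of matrix--vector multiplication gives $AQ_mx=Q_m(Q_m^{-1}AQ_mx)\to Zy\in\text{Im}(Z)$, whereas also $AQ_mx\to AZx=Au$, so $Au=Zy\in\text{Im}(Z)$, contradicting the choice of $u$.

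For the ``only if'' direction, I would prove the contrapositive: if $A(\text{Im}(Z))\subseteq\text{Im}(Z)$, then $\|Q^{-1}AQ\|$ does not tend to infinity, which I establish by exhibiting an explicit sequence $Q_\epsilon\to Z$ along which the norm stays bounded. Since $\|Q^{-1}AQ\|$ is unchanged under replacing $(A,Z,Q)$ by $(U^*AU,U^*ZU,U^*QU)$ for a unitary $U$, I may pick $U$ so that the first $r:=\rank(Z)$ columns span $\text{Im}(Z)$; then $Z$ takes the block form $\bigl(\begin{smallmatrix}Z_{11}&Z_{12}\\0&0\end{smallmatrix}\bigr)$ with the $r\times n$ block $[Z_{11}\,|\,Z_{12}]$ of full row rank $r$, while the invariance hypothesis forces $A$ to be block upper triangular, $A=\bigl(\begin{smallmatrix}A_{11}&A_{12}\\0&A_{22}\end{smallmatrix}\bigr)$ with $A_{11}$ of size $r\times r$. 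Now complete $[Z_{11}\,|\,Z_{12}]$ by some $(n-r)\times n$ block $[C_1\,|\,C_2]$ to an invertible matrix $M=\bigl(\begin{smallmatrix}Z_{11}&Z_{12}\\C_1&C_2\end{smallmatrix}\bigr)$, put $D_\epsilon=\diag(I_r,\epsilon I_{n-r})$, and set $Q_\epsilon=D_\epsilon M$. Then $Q_\epsilon\to Z$ as $\epsilon\to 0$, $Q_\epsilon$ is invertible for $\epsilon\neq 0$, and
\[
Q_\epsilon^{-1}AQ_\epsilon=M^{-1}\bigl(D_\epsilon^{-1}AD_\epsilon\bigr)M=M^{-1}\begin{pmatrix}A_{11}&\epsilon A_{12}\\0&A_{22}\end{pmatrix}M,
\]
which stays bounded (indeed converges) as $\epsilon\to0$; here the block-triangularity of $A$ is exactly what ensures that the diverging factor $\epsilon^{-1}$ only ever multiplies the vanishing factor $\epsilon$.

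The main obstacle is the construction in the second direction: one must find coordinates in which $Z$ and $A$ simplify simultaneously and recognize that the correct perturbation of the rank-deficient $Z$ is the one-parameter \emph{scaling} $Q_\epsilon=D_\epsilon M$ rather than an additive one. Everything else is routine once this is in place — the Frobenius-versus-operator-norm comparison and the continuity-plus-subsequence argument in the first direction, and the boundedness of $M^{-1}(D_\epsilon^{-1}AD_\epsilon)M$ in the second. One minor point to verify is that the completion $M$ can be chosen invertible, which holds because $[Z_{11}\,|\,Z_{12}]$ has full row rank $r$, and one should note that boundedness of $\|Q_\epsilon^{-1}AQ_\epsilon\|$ along this single sequence already contradicts $\lim_{Q\to Z}\|Q^{-1}AQ\|=\infty$.
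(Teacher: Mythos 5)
Your proof is correct, and it takes a genuinely different route from the paper's. For the direction ``$A(\mathrm{Im}\,Z)\nsubseteq \mathrm{Im}\,Z \Rightarrow \lim = \infty$'', the paper writes each $Q_j$ in SVD form $U_j\Sigma_j V_j^*$, extracts limits $U_\infty,V_\infty$ by compactness of the unitary group, identifies $U_\infty\Sigma_\infty V_\infty^*$ as an SVD of $Z$, and then performs a $2\times 2$ block computation of $\Sigma_j^{-1}U_j^*AU_j\Sigma_j$ to locate the diverging block $M_j^{21}$. Your argument replaces all of this with a single test vector: pick $u=Zx\in\mathrm{Im}(Z)$ with $Au\notin\mathrm{Im}(Z)$, use the elementary bound $\|Q^{-1}AQ\|\ge\|Q^{-1}AQx\|/\|x\|$, and derive a contradiction from compactness in $\mathbb{C}^n$ alone via $Q_m(Q_m^{-1}AQ_m x)=AQ_m x\to Au$. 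This is shorter, avoids the SVD entirely, and makes transparent why divergence must occur (the image space must ``close up'' under $A$). For the converse direction both proofs are constructions of a bounded sequence $Q\to Z$ in coordinates adapted to $\mathrm{Im}(Z)$; the paper anchors these coordinates in the SVD $Z=U_\infty\Sigma_\infty V_\infty^*$ and scales the small singular values, whereas you conjugate by a unitary aligning $\mathrm{Im}(Z)$ with the first $r$ coordinates, complete the nonzero rows of $Z$ to an invertible $M$, and scale via $Q_\epsilon=D_\epsilon M$; the two are essentially the same device, and in both the block upper-triangularity of $A$ (forced by invariance of $\mathrm{Im}(Z)$) is exactly what cancels the diverging $\epsilon^{-1}$. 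One thing the paper's approach buys that yours does not is a single block computation \eqref{bigmat} reused verbatim for both directions, but your first-direction argument is markedly more elementary and would be a reasonable simplification.
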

\begin{proof}
First assume that there exists a sequence $(Q_j)_{j=1}^\infty$ in $GL(n,\mathbb{C})$ converging to $Z$ such that 
$\|Q_j^{-1} A Q_j\|$ stays bounded. By the singular value decomposition we can write $Q_j = U_j\Sigma_j V_j^*,$ and by a standard compactness argument we can extract subsequences of $(U_j)_{j=1}^\infty$ and $(V_j)_{j=1}^\infty$ that converge to some unitary matrices $U_{\infty}$ and $V_\infty$, respectively. Note that  $(V_j^*Q_j^*Q_j V_j)_{j=1}^\infty=( \Sigma_j^2)_{j=1}^\infty$ converges to $V_\infty^*Z^*Z V_\infty$, so the latter is a diagonal matrix that we can denote $\Sigma_{\infty}^2$. By taking the limit of $Q_jV_j=U_j\Sigma_j$ we also arrive at $ZV_\infty=U_\infty\Sigma_{\infty}$, from which it follows that $$Z=U_\infty\Sigma_{\infty}V_\infty^*,$$ i.e.~the triple $(U_\infty,\Sigma_{\infty},V_\infty)$ provides a singular value decomposition of $Z$.   
Now, since the Frobenius norm is unitarily invariant, the expression $\|Q_j^{-1}AQ_j\|$ simplifies to
	\begin{equation}\label{svdfro}
		\|Q_j^{-1}AQ_j\| = \|V_j\Sigma_j^{-1}U_j^*AU_j\Sigma_jV_j^*\| = \|\Sigma_j^{-1}U_j^*AU_j\Sigma_j\| = \|M_j\|,
	\end{equation}
	where we have set $M_j\coloneq \Sigma_j^{-1}U_j^*AU_j\Sigma_j$.
In order to estimate $M_j$, let $r=\text{rank}(Z)$ and decompose $\Sigma_j,U_j$ (including $j=\infty$) as block matrices:
	$$
	\Sigma_j = \begin{pmatrix}
		\Sigma_{1,j} & 0 \\
		0 & \Sigma_{2,j}
	\end{pmatrix}
	\qquad
	U_j = \begin{pmatrix}
		U_{1,j} & U_{2,j}
	\end{pmatrix},
	$$
	where $\Sigma_{1,j},\Sigma_{2,j},V_{1,j}$ and $V_{2,j}$ are of sizes $r\times r,(n-r)\times(n-r),n\times r$ and $n\times (n-r)$, respectively. Since $Q_j$, is invertible for $j\in\mathbb{N}$, so is $\Sigma_{1,j},\Sigma_{2,j}$. We then have
	\begin{equation}\label{bigmat}\begin{aligned}
		&M_j =\begin{pmatrix}
			\Sigma_{1,j}^{-1} & 0 \\
			0 & \Sigma_{2,j}^{-1}
		\end{pmatrix}
		\begin{pmatrix}
			U_{1,j}^* \\
			U_{2,j}^*
		\end{pmatrix}
		A
		\begin{pmatrix}
			U_{1,j} & U_{2,j}
		\end{pmatrix}
		\begin{pmatrix}
			\Sigma_{1,j} & 0 \\
			0 & \Sigma_{2,j}
		\end{pmatrix}= \\
		&\begin{pmatrix}
			\Sigma_{1,j}^{-1}U_{1,j}^*AU_{1,j}\Sigma_{1,j} & \Sigma_{1,j}^{-1}U_{1,j}^*AU_{2,j}\Sigma_{2,j} \\
			\Sigma_{2,j}^{-1}U_{2,j}^*AU_{1,j}\Sigma_{1,j} & \Sigma_{2,j}^{-1}U_{2,j}^*AU_{2,j}\Sigma_{2,j}
		\end{pmatrix} 
		=: \begin{pmatrix}
			M_j^{11} & M_j^{12} \\
			M_j^{21} & M_j^{22} 
		\end{pmatrix}.
	\end{aligned}
\end{equation}
%	As $j\to\infty$, we note the following:
%	\begin{enumerate}
%		\item $\Sigma_{1,j}\to \Sigma_0^{(1)}$, and thus $(\Sigma_{1,j})^{-1}\to (\Sigma_0^{(1)})^{-1}$ since $\Sigma_0^{(1)}$ is invertible,
%		\item  $\Sigma_{2,j}\to 0$ since $\text{rank}(Q) = r$,
%		\item $V_{2,j} \to V_0^{(2)}$ and the columns of $V_0^{(2)}$ span $\text{Ker}(Q)$,
%		\item Similarly, $V_{1,j} \to V_0^{(1)}$ and the columns of $V_0^{(1)}$ span $\text{Ker}(Q)^\perp$.
%	\end{enumerate}

Consider $M_j^{21}$ as a product of the matrices $\Sigma_{2,j}^{-1}$ and $U_{2,j}^*AU_{1,j}\Sigma_{1,j}$. Since $\Sigma_{2,j}\to 0$, the inverse $\Sigma_{2,j}^{-1}$ is a diagonal matrix where all diagonal elements tend to $\infty$. 
Henceforth, if we show that the other factor converges to a non-vanishing matrix, this would imply that $(\|M^{21}_j\|)_{j=1}^{\infty}$ (and hence also $(\|M_j\|)_{j=1}^\infty$) converges to infinity, contradicting our original assumption. Now, the other factor clearly
converges to $U_{2,\infty}^*AU_{1,\infty}\Sigma_{1,\infty}$, which thus has to be identically zero. But this matrix is zero if and only if 
\begin{equation}\label{kerineq1}
  U_{2,\infty}^*AU_{1,\infty}= 0.
\end{equation}
As the columns of $U_{1,\infty}, U_{2,\infty}$ form a basis for $\text{Im}(Z)$ and $\text{Im}(Z)^{\perp}$ respectively, the above is equivalent to \begin{equation}\label{kerineq2}A(\text{Im}(Z))\subseteq \text{Im}(Z),\end{equation}
as was to be shown.

Conversely, assume that \eqref{kerineq2} holds, let $U_\infty\Sigma_{\infty}V_\infty^*$ be an SVD of $Z$ and adopt the same notation as above. Set 
$$Q_j = U_\infty\begin{pmatrix}
	\Sigma_{1,\infty} & 0 \\
	0 & \frac{1}{j}I
\end{pmatrix}V_\infty^*,$$
where $I$ here is the $(n-r)\times(n-r)$ identity matrix. By repeating the computations in \eqref{bigmat}, we see that $M_j$ is given by
\begin{align*}
	&\begin{pmatrix}
		\Sigma_{1,\infty}^{-1}U_{1,\infty}^*AU_{1,\infty}\Sigma_{1,\infty} & \Sigma_{1,\infty}^{-1}U_{1,\infty}^*AU_{2,\infty}j^{-1} \\
		jU_{1,\infty}^*AU_{2,\infty}\Sigma_{1,\infty} & jU_{2,\infty}^*AU_{2,\infty}j^{-1}
	\end{pmatrix}=\\
	&\begin{pmatrix}
		\Sigma_{1,\infty}U_{1,\infty}^*AU_{1,\infty}\Sigma_{1,\infty}^{-1} & j^{-1}\Sigma_{1,\infty}^{-1}U_{1,\infty}^*AU_{2,\infty} \\
		0 & U_{2,\infty}^*AU_{2,\infty}
	\end{pmatrix}
\end{align*}
where the lower left corner is zero by the equivalence of \eqref{kerineq1} and \eqref{kerineq2}. Since the only dependence on $j$ is the factor $j^{-1}$ in the upper corner, the above construction provides a sequence $(Q_j)_{j=1}^\infty$ in $GL(n,\mathbb{C})$ such that \eqref{svdfro} is bounded, and the proof is complete.
\end{proof}

We are now ready to prove Theorem \ref{diverg}.
\begin{proof}[Proof of Theorem \ref{diverg}]
	Let $A$ and $Z$ be a fixed matrices where $Z$ is rank-deficient. Combining Lemma \ref{lem1} with the fact that the mapping
	$$A\to Q^{-1} A Q$$
	leaves the eigenvalues unchanged, we conclude that 
	$\lim_{Q\to Z} \|J\diamond \left(Q^{-1} A Q\right)\| = \infty$ if and only if $\lim_{Q\to Z} \| \left(Q^{-1} A Q\right)\| = \infty$. By Lemma \ref{lem2} this in turn happens
	if and only if 
	$A(\text{Im}(Z))\nsubseteq \text{Im}(Z),$
	i.e. $\text{Im}(Z)$ is not an invariant subspace of $A$. Since by definition
	\begin{equation}
		f_\mathcal{A}(Q)=\frac{1}{2}\sum_{k=1}^K \|J\diamond Q^{-1}A_k Q\|^2,
	\end{equation}
	we see that $\lim_{Q\to Z} f_\mathcal{A}(Q) =\infty$ if and only if $\text{Im}(Z)$ is not an invariant subspace for \textit{at least one} of the matrices $A_1,\ldots,A_K$. Thus, if $A_1,\ldots,A_K$ has no common nontrivial invariant subspace we get $\lim_{Q\to Z} f_\mathcal{A}(Q) =\infty$ for all rank-deficient matrices $Z\neq 0$. Conversely, if there is such a common subspace, say $\mathcal{Y}$, we can construct a matrix ${Z}$ with $\text{Im}({Z}) = \mathcal{Y}$ and a sequence ${Q}_j\to {Z}$ as in Lemma \ref{lem2} such that $\sup_j f_\mathcal{A}({Q}_j) < \infty$.
\end{proof}

\section{$\mathcal{A}$ has no nontrivial invariant subspace with probability one}\label{sec3}

In a concrete application, in order to show that  $\lim_{Q\to Z}f_{\mathcal{A}}(Q) = \infty$ for all nonzero rank-deficient matrices, one must be able to verify the assumptions on the matrix tuple $\mathcal{A}$: that it has no common nontrivial invariant subspace. Since this can be an exhaustive task in practice, we show in this section that this is satisfied for ``almost all'' matrix tuples $\mathcal{A}$. By ``the'' Lebesgue measure on $\mathbb{M}(n,\mathbb{F})$ we simply mean a Haar measure (which is unique up to a constant), or equivalently, the Lebesgue measure induced by identifying $\mathbb{M}(n,\mathbb{F})$ with $\mathbb{F}^{n^2}$ under some linear transformation. A probability measure $\mathcal{P}$ on the product space $\big(\mathbb{M}(n,\mathbb{F})\big)^K$ is said to be absolutely continuous if it is absolutely continuous with respect to the corresponding product Lebesgue-measure.

\begin{proposition}\label{probone}
Let $\mathcal{P}$ be an absolutely continuous probability measure on $\big(\mathbb{M}(n,\mathbb{F})\big)^K$. Then the set of tuples $\mathcal{A}=\{A_1,\ldots,A_K\}$ that have a common invariant subspace has measure zero. In particular, if $\mathcal{A}$ is drawn from $\mathcal{P}$, it holds with probability one that $\lim_{Q\to Z}f_{\mathcal{A}}(Q) = \infty$ at all non-zero rank-deficient points $Z$.
\end{proposition}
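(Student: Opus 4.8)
The plan is to reduce everything to a Lebesgue-measure statement. By Theorem~\ref{diverg}, the set of tuples $\mathcal{A}$ for which $\lim_{Q\to Z}f_{\mathcal{A}}(Q)=\infty$ at every nonzero rank-deficient $Z$ is \emph{exactly} the complement of the set $\mathcal{B}\subseteq(\mathbb{M}(n,\mathbb{F}))^K$ of tuples $(A_1,\dots,A_K)$ possessing a common nontrivial invariant subspace. Hence, since $\mathcal{P}$ is absolutely continuous, it suffices to prove that $\mathcal{B}$ has Lebesgue measure zero: this gives $\mathcal{P}(\mathcal{B})=0$ and therefore the stated almost-sure divergence. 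A common nontrivial invariant subspace has some dimension $d\in\{1,\dots,n-1\}$, so $\mathcal{B}=\bigcup_{d=1}^{n-1}\mathcal{B}_d$ is a finite union, where $\mathcal{B}_d$ is the set of tuples with a common $d$-dimensional invariant subspace, and it is enough to show each $\mathcal{B}_d$ is null. We assume $K\ge 2$ and $n\ge 2$; for $n=1$ there is nothing to prove.

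The heart of the argument is a Tonelli argument in the first two coordinates. Two elementary facts are needed. First: if the characteristic polynomial of $A_1$ has $n$ distinct roots in $\mathbb{C}$ --- and the set where this fails is the zero locus of a nonzero polynomial in the entries of $A_1$, hence Lebesgue-null in $\mathbb{M}(n,\mathbb{F})$ --- then $A_1$ has only finitely many invariant subspaces: over $\mathbb{C}$ they are the $2^n$ subspaces spanned by subsets of a fixed eigenbasis, and over $\mathbb{R}$, passing to the complexification, they are indexed by the subsets of eigenvalues that are stable under complex conjugation, again a finite list. Second: for any \emph{fixed} nontrivial subspace $\mathcal{M}\subseteq\mathbb{F}^n$, the set $\{B\in\mathbb{M}(n,\mathbb{F}):B\mathcal{M}\subseteq\mathcal{M}\}$ equals, after conjugation by a fixed invertible matrix, the block upper-triangular matrices for a splitting $\mathbb{F}^n=\mathcal{M}\oplus\mathcal{N}$; it is thus a proper $\mathbb{F}$-linear subspace, of codimension $\dim\mathcal{M}\cdot(n-\dim\mathcal{M})\ge 1$, and in particular Lebesgue-null. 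Putting these together: for every $A_1$ with simple spectrum, the section $\{A_2:\{A_1,A_2\}\text{ has a common nontrivial invariant subspace}\}$ is the union, over the finitely many $A_1$-invariant subspaces $\mathcal{M}$, of the null sets $\{A_2:A_2\mathcal{M}\subseteq\mathcal{M}\}$, hence null.

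To finish, first note measurability: $\mathcal{B}_d$ is the image of the closed incidence set $\{(A_1,\dots,A_K,\mathcal{M}):A_k\mathcal{M}\subseteq\mathcal{M}\text{ for }k=1,\dots,K\}\subseteq(\mathbb{M}(n,\mathbb{F}))^K\times\mathrm{Gr}(d,n)$ under the projection that forgets $\mathcal{M}$, and this projection is a closed map because the Grassmannian $\mathrm{Gr}(d,n)$ is compact; hence $\mathcal{B}$, and likewise the set $\mathcal{B}'\subseteq(\mathbb{M}(n,\mathbb{F}))^2$ of pairs with a common nontrivial invariant subspace, are closed and Borel. By the preceding paragraph the $A_2$-section of $\mathcal{B}'$ is null for every $A_1$ outside a null set, so Tonelli's theorem gives that $\mathcal{B}'$ is Lebesgue-null. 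Finally, any common invariant subspace of $\{A_1,\dots,A_K\}$ is in particular one of $\{A_1,A_2\}$, so $\mathcal{B}\subseteq\mathcal{B}'\times(\mathbb{M}(n,\mathbb{F}))^{K-2}$, whence $\mathcal{B}$ --- and each $\mathcal{B}_d$ --- is null. Absolute continuity then yields $\mathcal{P}(\mathcal{B})=0$, and Theorem~\ref{diverg} completes the proof.

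The step requiring the most care is the claim that a matrix with simple spectrum has only finitely many invariant subspaces, together with the correct bookkeeping of the conjugation action on the complex eigenspaces in the real case; everything else is routine. One can bypass this: the incidence set above is a smooth vector bundle over $\mathrm{Gr}(d,n)$ --- the fibre over $\mathcal{M}$ being the $\mathbb{F}$-linear space $\{(A_1,\dots,A_K):A_k\mathcal{M}\subseteq\mathcal{M}\text{ for all }k\}$, cut out by linear conditions of constant rank --- hence a manifold of real dimension $N\big(Kn^2-(K-1)d(n-d)\big)$, where $N=\dim_{\mathbb{R}}\mathbb{F}$. For $K\ge 2$ this is strictly smaller than $\dim_{\mathbb{R}}(\mathbb{M}(n,\mathbb{F}))^K=NKn^2$, and a $C^1$ image of a manifold of strictly smaller dimension is Lebesgue-null, so $\mathcal{B}_d$ is null. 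In both approaches the decisive inequality is precisely the one that fails for $K=1$.
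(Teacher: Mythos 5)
Your main argument is essentially the paper's: show $\{B:B\mathcal{M}\subseteq\mathcal{M}\}$ is a proper linear subspace (hence Lebesgue-null) for each fixed nontrivial $\mathcal{M}$, show that a matrix with simple spectrum has finitely many invariant subspaces, combine via Tonelli in the first two coordinates, and reduce the general $K$ to $K=2$ by noting a common invariant subspace of the whole tuple is in particular one of $\{A_1,A_2\}$. The correct content is the same, and the proof is sound. You do add two things the paper glosses over: you justify measurability of $\mathcal{B}$ (via the compact Grassmannian incidence set and the fact that projections with compact fibres are closed), and you spell out the complexification bookkeeping needed to see that a real matrix with $n$ distinct complex eigenvalues still has only finitely many real invariant subspaces --- the paper asserts finiteness without this. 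Your closing remark offers a genuinely different bypass: realize the incidence variety over $\mathrm{Gr}(d,n)$ as a vector bundle of real dimension $N\big(Kn^2-(K-1)d(n-d)\big)$, strictly below $NKn^2$ for $K\ge 2$ and $1\le d\le n-1$, and conclude from the fact that a $C^1$ image of a lower-dimensional manifold is null. That route avoids both Tonelli and the simple-spectrum lemma entirely and makes the decisive inequality --- which fails precisely when $K=1$ --- structurally visible, at the cost of invoking a little differential topology (vector bundle structure, Sard-type null-image lemma) in place of elementary measure theory.
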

\begin{proof}
	Let $\mu$ denote regular Lebesgue measure on $\mathbb{M}(n,\mathbb{F})$. We first prove that given a fixed nontrivial subspace $\mathcal{Y}\subseteq \mathbb{F}^n$, the set of matrices invariant under $\mathcal{Y}$ has measure zero:
	$$\mu(\{A:~ A\mathcal{Y}\subseteq \mathcal{Y}\}) = 0.$$
	To this end, let $m=\operatorname{dim}(\mathcal{Y})$ and choose an orthonormal basis $v_1,\ldots,v_n$ of $\mathbb{F}^n$ such that the first $m$ vectors $v_1,\ldots,v_m$ form a basis for $\mathcal{Y}$. Then $A\mathcal{Y}\subseteq \mathcal{Y}$ if and only if
	$$\begin{pmatrix}
		v_{m+1} & \cdots & v_n
	\end{pmatrix}^T
	A
	\begin{pmatrix}
		v_{1} & \cdots & v_m
	\end{pmatrix}
	=0.$$
	This is a nontrivial linear equation system in the coefficients of $A$, so the set of solutions is a nontrivial linear subspace, which thus has Lebesgue measure $0$.
	
	We now turn attention to the product space $\mathbb{M}(n,\mathbb{F})\times \mathbb{M}(n,\mathbb{F})$ equipped with the product measure $\mu \times \mu$. Consider the set
	$$\mathcal{S} = \{(A,B):~ A,B\text{ have a common n.i.s.}\}, $$
	where ``n.i.s.'' is short for ``nontrivial invariant subspace''. Let $\chi_\mathcal{S}(A,B)$ be the indicator function for $\mathcal{S}$. By Tonelli's theorem,
	$$(\mu\times\mu)(\mathcal{S}) = \int\left(\int \chi_\mathcal{S}(A,B)\,d\mu(B) \right) d\mu(A)$$
	For fixed $A$ with distinct eigenvalues, the inner integral is equal to
	\begin{align*}
		\int \chi_\mathcal{S}(A,B)\,d\mu(B) &= \mu\left(\bigcup_{\{\mathcal{Y}\subseteq \mathbb{F}^n, \mathcal{Y} \text{ is a n.i.s. of A}\}}\{B:~ B\mathcal{Y}\subseteq \mathcal{Y}\}\right)\\
		&\leq \sum_{\{\mathcal{Y}\subseteq \mathbb{F}^n, \mathcal{Y} \text{ is a n.i.s. of A}\}} \mu\left(\{B:~ B\mathcal{Y}\subseteq \mathcal{Y}\}\right) =0
	\end{align*}
	since having distinct eigenvalues means that $A$ only has a finite number of invariant subspaces. Now, by Lemma \ref{distincteig} below (which, although it is standard, is included for completeness) the set of $A$'s which do not have distinct eigenvalues has Lebesgue measure $0$, so we get $(\mu\times\mu)(\mathcal{S})=0$ which proves the proposition in the case $K=2$. But if $\mathcal{A}$ has a common n.i.s.~for $K\geq 3$ there must be two matrices that share this n.i.s., which concludes the proof.
\end{proof}

\begin{lemma}\label{distincteig}
	The set of matrices in $\mathbb{M}(n,\mathbb{F})$ that have non-distinct eigenvalues has Lebesgue measure zero.
\end{lemma}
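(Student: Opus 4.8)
The plan is to show that the ``bad set'' of matrices with a repeated eigenvalue is the zero set of a single nonzero polynomial in the entries of $A$, and then invoke the standard fact that a proper algebraic subset of $\mathbb{F}^{n^2}$ has Lebesgue measure zero. Concretely, I would introduce the \emph{discriminant} of the characteristic polynomial $p_A(\lambda)=\det(\lambda I - A)$, namely $\mathrm{disc}(p_A)=\prod_{i<j}(\lambda_i-\lambda_j)^2$ where $\lambda_1,\dots,\lambda_n$ are the eigenvalues with multiplicity. This quantity is a symmetric polynomial in the roots, hence (by the fundamental theorem of symmetric polynomials, or equivalently by writing it as a resultant $\mathrm{Res}(p_A,p_A')$ up to sign and leading-coefficient factors) it is a polynomial with integer coefficients in the coefficients of $p_A$, which in turn are polynomials in the entries of $A$. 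Thus $A\mapsto \mathrm{disc}(p_A)$ is a polynomial map $\mathbb{F}^{n^2}\to\mathbb{F}$, and $A$ has a repeated eigenvalue precisely when $\mathrm{disc}(p_A)=0$.

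The key remaining point is that this polynomial is not identically zero, so that its zero set is a proper algebraic subset. For this I would simply exhibit one matrix on which it is nonzero: for instance the diagonal matrix $\diag(1,2,\dots,n)$ has characteristic polynomial $\prod_{i=1}^n(\lambda - i)$ with pairwise distinct roots, so its discriminant equals $\prod_{i<j}(i-j)^2\neq 0$. (In the complex case one can equally take any matrix with $n$ distinct eigenvalues; in the real case the diagonal example already suffices.) Hence $\mathrm{disc}(p_A)$ is a nonzero polynomial in the $n^2$ real or complex variables given by the entries of $A$.

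Finally I would close the argument with the elementary measure-theoretic fact that the zero set of a nonzero polynomial $p\colon\mathbb{F}^{N}\to\mathbb{F}$ has Lebesgue measure zero in $\mathbb{F}^N$ (identifying $\mathbb{C}^N$ with $\mathbb{R}^{2N}$ when $\mathbb{F}=\mathbb{C}$). This is proved by induction on $N$: writing $p$ as a polynomial in the last variable with coefficients depending on the first $N-1$, for almost every choice of the first $N-1$ variables the resulting one-variable polynomial is nonzero and hence has only finitely many roots, so Fubini/Tonelli gives measure zero for the whole zero set. Applying this with $N=n^2$ (or $2n^2$) and $p=\mathrm{disc}(p_A)$ finishes the proof.

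I expect the only genuine subtlety to be the claim that the discriminant, a priori a polynomial in the \emph{roots}, really is a polynomial in the \emph{entries} of $A$; this is where one must invoke symmetry of the discriminant together with Newton's identities (or the resultant formulation $\mathrm{disc}(p) = (-1)^{n(n-1)/2}\mathrm{Res}(p,p')$, valid since $p_A$ is monic), all of which are classical. Everything else is routine.
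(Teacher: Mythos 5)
Your proposal is correct and follows essentially the same route as the paper: both reduce the bad set to the vanishing locus of the resultant/discriminant of $p_A$ and $p_A'$ (the paper via the Sylvester matrix determinant, you via $\mathrm{disc}(p_A)$, which for monic $p_A$ agrees up to sign), and both then invoke that the zero set of a nonzero polynomial in $\mathbb{F}^{n^2}$ has Lebesgue measure zero. One small plus on your side: you explicitly verify the polynomial is not identically zero by evaluating on $\diag(1,\dots,n)$, a point the paper uses implicitly but does not spell out.
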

\begin{proof}
A matrix $A$ has $n$ distinct eigenvalues if and only if its characteristic polynomial, 
	$$p_A(z) \coloneqq \operatorname{det}(A-zI) = p_nz^n + p_{n-1}z^{n-1} + \cdots + p_0,$$
	has $n$ distinct roots. This in turn can only happen if $p_A(z)$ and its derivative $p'_A(z)= q_{n-1}z^{n-1} + \cdots + q_0$ have a common factor, which is equivalent to the Sylvester matrix,
	\[ S(p_A,p'_A) = 
	\begin{pmatrix}
		p_n & p_{n-1} & \cdots & p_0 & 0 & 0 & \cdots & 0 \\
		0 & p_{n} & \cdots & p_1 & p_0 & 0 & \cdots & 0 \\
		\cdots & \cdots & \cdots & \cdots & \cdots & \cdots & \cdots & \cdots \\
		0 & 0 & \cdots & 0 & p_n & p_{n-1} & \cdots & p_0 \\
		q_{n-1} & q_{n-2} & \cdots & q_0 & 0 & 0 & \cdots & 0 \\
		0 & q_{n-1} & \cdots & q_1 & q_0 & 0 & \cdots & 0 \\
		\cdots & \cdots & \cdots & \cdots & \cdots & \cdots & \cdots & \cdots \\
		0 & 0 & \cdots & 0 & 0 & q_{n-1} & \cdots & q_0 
	\end{pmatrix},
	\]
	having determinant zero, $\operatorname{det}(S(p_A,p'_A))=0$ (see e.g.~\cite{akritas1993sylvester}). This is a polynomial equation in the coefficients $p_n,\ldots, p_0$, $p'_{n-1},\ldots, p'_0$ which themselves are polynomials in the entries $a_{ij}$ of $A$. Writing $S(A) \coloneq S(p_A,p'_A)$, we thus have 
	$$\{A:~ A \text{ does not have n distinct eigenvalues}\} \subseteq \{A:~ \operatorname{det}(S(A)) = 0\}.$$
	When $A$ is real, the later set is the zero set of the polynomial $\operatorname{det}(S(A)): \mathbb{R}^{n^2}\to\mathbb{R}$, and thus has $n^2$-dimensional real Lebesgue measure zero (this is a well-known result but can be hard to locate in the literature, see e.g. \cite{zerosetpol} for an elementary proof).
	
	When $A$ has complex entries, we may view $\operatorname{det}(S(A))=0$ as two real polynomial equations, $\Re{\operatorname{det}(S(A))}=0$ and $\Im{(\operatorname{det}(S(A)))}=0$. The set of solutions for each of these equations has $2n^2$-dimensional Lebesgue measure zero, thus so has their intersection.
\end{proof}

\section{Higher order derivatives}\label{sec4}

Having shown in the previous two sections that minimizing $f_{\mathcal{A}}$ is a well-posed approach to the joint diagonalization problem, at least with probability one, the next natural question is how to do this in practice. While we do not intend to discuss any particular algorithms in this paper, clearly having access to the gradient and the Hessian is of key importance for this task. Rather surprisingly, it seems that the higher order derivatives of $f_{\mathcal{A}}$ have not been computed in previous contributions on the topic, so we shall now focus on this.

For a general order $j$ we shall compute a representation of the $j$'th order differential of our functional $f_\mathcal{A}$, which we later use to obtain the more familiar gradient and Hessian representation of the first and second derivatives. To this end, we rely on standard differential calculus for functions from one normed vector-space to another, see e.g.~\cite{cartan1983differential}.

To compute the higher order derivatives of \eqref{functional}, we first restrict attention to the action of similarity transformation, which, for a given matrix $A$, can be expressed as the function $h_A: GL(n,\mathbb{C})\to \mathbb{M}(n,\mathbb{F})$ defined as
$$h_A(Q) = Q^{-1}AQ.$$

The $j$'th order differential $d^j h_A|_Q$ of $h_A$ at $Q$ is realized as a symmetric $j$-multilinear operator, that is, a function
$$d^j h_A|_Q: GL(n,\mathbb{F})^j \to \mathbb{M}(n,\mathbb{F}),$$
that is linear in each of its $j$ matrix arguments, and invariant under permutations of the arguments. Such an operator is uniquely determined by its values over the diagonal subspace $\{(Z,\ldots, Z);\, Z\in GL(n,\mathbb{F})^k\}$, due to the symmetry property \cite{cartan1983differential}. With slight abuse of notation we shall write $d^k h_A|_Q(Z)$ to mean $d^k h_A|_Q(Z,\ldots,Z)$
Letting $[X,Y]=XY-YX$ denote the matrix commutator product, we have: 

\begin{lemma}\label{l1}
	The $j$'th differential of $h_A$, evaluated on the diagonal, is given by
	$$d^j h_A|_Q(Z) = (-1)^jj![Q^{-1}Z,(Q^{-1}Z)^{j-1}h_A(Q)].$$
\end{lemma}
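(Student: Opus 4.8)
The plan is to compute $d^j h_A|_Q(Z)$ by induction on $j$, treating the evaluation on the diagonal as a shorthand and differentiating the map $Q \mapsto h_A(Q) = Q^{-1}AQ$ directly. The base case $j=1$ is the standard computation of the differential of a conjugation: using $d(Q^{-1})|_Q(Z) = -Q^{-1}ZQ^{-1}$ together with the product rule, one gets $dh_A|_Q(Z) = -Q^{-1}ZQ^{-1}AQ + Q^{-1}AZ = Q^{-1}(AZ - ZQ^{-1}AQ) = -[Q^{-1}Z, h_A(Q)]$, which matches the claimed formula with $j=1$, $(-1)^1 1! = -1$, and $(Q^{-1}Z)^{0} = I$.

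For the inductive step, I would assume the formula holds at order $j$ and differentiate the expression $g_j(Q) := (-1)^j j! [Q^{-1}Z, (Q^{-1}Z)^{j-1} h_A(Q)]$ once more in the direction $Z$, then identify the result with $g_{j+1}(Q)$. The key computations are: $d(Q^{-1}Z)|_Q(Z) = -Q^{-1}ZQ^{-1}Z = -(Q^{-1}Z)^2$, so more generally $d\big((Q^{-1}Z)^{m}\big)|_Q(Z) = -m (Q^{-1}Z)^{m+1}$ by the product rule (all factors equal, so the $m$ terms coincide), and $dh_A|_Q(Z) = -[Q^{-1}Z, h_A(Q)]$ from the base case. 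Applying the product/Leibniz rule to the commutator $[\,\cdot\,,\,\cdot\,]$ (which is bilinear), the derivative of $g_j$ splits into three contributions: one from differentiating the leading $Q^{-1}Z$, one from differentiating $(Q^{-1}Z)^{j-1}$, and one from differentiating $h_A(Q)$. These should combine — using the identity $[X, [X, Y]] = [X^2, Y] - 2X[X,Y]X$ is not quite what is needed; rather one should just expand everything in terms of $X := Q^{-1}Z$ and $B := h_A(Q)$ and collect terms. The three pieces contribute coefficients that sum to $-(j+1)$ times $[X, X^{j} B]$ up to sign, giving $g_{j+1}(Q) = (-1)^{j+1}(j+1)! [X, X^{j} B]$ as claimed.

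The main obstacle is purely bookkeeping: one must carefully verify that the commutator structure is preserved, i.e.\ that after differentiating $[X, X^{j-1}B]$ all the terms reassemble into a single commutator $[X, X^{j}B]$ with the correct integer coefficient, rather than producing stray non-commutator terms. Concretely, writing $X = Q^{-1}Z$ and $B = h_A(Q)$, differentiation in direction $Z$ sends $X \mapsto -X^2$ and $B \mapsto -[X,B] = -(XB - BX)$, so
\begin{equation}
d\big([X, X^{j-1}B]\big) = [-X^2, X^{j-1}B] + [X, -(j-1)X^{j}B] + [X, -X^{j-1}(XB-BX)].
\end{equation}
Expanding each commutator and using $[X^2, X^{j-1}B] = X^{j+1}B - X^{j-1}BX^2$ and $[X, X^{j-1}(XB - BX)] = X^{j+1}B - X^j B X - X^j B X + X^{j-1}BX^2$, one sees the $X^{j-1}BX^2$ terms cancel and the rest collapses to $-(j+1)(X^{j+1}B - X^j B X) = -(j+1)[X, X^{j}B]$. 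Multiplying by the inductive prefactor $(-1)^j j!$ yields $(-1)^{j+1}(j+1)!\,[X, X^{j}B] = d^{j+1}h_A|_Q(Z)$, completing the induction. A remark should be added justifying why evaluating the symmetric multilinear differential on the diagonal determines it completely, which is already noted in the text preceding the lemma.
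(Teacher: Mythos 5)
Your inductive computation is correct, and I checked the algebra of the commutator bookkeeping in the inductive step: the coefficients indeed sum to $-(j+1)$ and the non-commutator term $X^{j-1}BX^2$ cancels, so with $X=Q^{-1}Z$ and $B=h_A(Q)$ you obtain $d\big([X,X^{j-1}B]\big)=-(j+1)[X,X^jB]$, giving $(-1)^{j+1}(j+1)!\,[X,X^jB]$ as required. This is genuinely different from the paper's route: the paper expands $(Q+Z)^{-1}=\sum_{i}(-1)^iQ^{-1}(ZQ^{-1})^i+\mathcal{O}(\|Z\|^{J+1})$ as a Neumann series, multiplies out $h_A(Q+Z)$, collects all orders at once, and then invokes the uniqueness of the Taylor polynomial to identify the $j$'th term with $\tfrac{1}{j!}d^jh_A|_Q(Z)$. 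Your approach instead builds up order by order by direct Leibniz-rule differentiation. The paper's method is shorter and produces all orders simultaneously in a single computation, with the error control handled by the remainder term of the geometric series; your method is more elementary but requires the careful commutator algebra at each step. One small point you should state explicitly rather than leave implicit: the inductive step relies on the standard fact that if $\phi_j(Q):=d^jh_A|_Q(Z,\ldots,Z)$ with $Z$ fixed, then $d\phi_j|_Q(Z)=d^{j+1}h_A|_Q(Z,\ldots,Z)$; this is what legitimizes differentiating your formula $g_j(Q)$ in direction $Z$ to produce the next-order diagonal value, and it is covered by the reference \cite{cartan1983differential} that the paper already cites. With that one-line justification added, your proof is complete.
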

\begin{proof}
Let $J\geq 1$ and recall that
$(I-X)^{-1} = \sum_{j=0}^J X^j + \mathcal{O}(\|X\|^{J+1})$
as $\|X\|\to 0$. Thus, fixing $Q\in GL(n,\mathbb{F})$, we have that
\begin{align*}
	&(Q+Z)^{-1} = Q^{-1}(I+ZQ^{-1})^{-1} = \sum_{j=0}^J Q^{-1}\left(-ZQ^{-1}\right)^j + \mathcal{O}(\|ZQ^{-1}\|^{J+1})\\
	&= \sum_{j=0}^J (-1)^jQ^{-1}\left(ZQ^{-1}\right)^j + \mathcal{O}(\|Z\|^{J+1})
\end{align*}
as $\|Z\|\to 0$, $Z\in M(n,\mathbb{F})$. We can then compute,
\begin{align*}
	& h_A(Q+Z)=(Q+Z)^{-1}A(Q+Z) \\
	&= \left(\sum_{j=0}^J (-1)^jQ^{-1}\left(ZQ^{-1}\right)^j + \mathcal{O}(\|Z\|^{J+1})\right)A(Q+Z)\\
	%&= \sum_{n=0}^\infty t^n(-1)^nQ^{-1}\left(ZQ^{-1}\right)^nAQ + \sum_{n=0}^\infty t^{n+1} (-1)^nQ^{-1}\left(ZQ^{-1}\right)^n AZ\\
	&= Q^{-1}AQ + \sum_{j=1}^J (-1)^j\left(Q^{-1}\left(ZQ^{-1}\right)^j AQ - Q^{-1}\left(ZQ^{-1}\right)^{j-1} AZ\right)+\mathcal{O}(\|Z\|^{J+1})\\
	&= h_A(Q) + \sum_{j=1}^J (-1)^j [Q^{-1}Z,(Q^{-1}Z)^{j-1}h_A(Q)] + \mathcal{O}(\|Z\|^{J+1}).
\end{align*}
The error term $\mathcal{O}(\|Z\|^{J+1})$ shows that the above is the $J$'th order Taylor expansion for $h_A$. The uniqueness theorem for Taylor polynomials ensures that the $j$'th term then must equal the $j$'th differential $d^j h_A|_Q(Z)$, see e.g.~\cite{cartan1983differential}. %\textcolor{red}{Qse $~$ after full stop in abbreviations. When referring to specific theorems, help the reader by naming the result.}
\end{proof}
%In particular, the first two derivatives are
%\begin{align*}
%	dh_A|_Q(Z) &= [h_A(Q),Q^{-1}Z]\\
%	d^2h_A|_Q(Z) &= 2[Q^{-1}Z,Q^{-1}Zh_A(Q)].
%\end{align*}
We return to the functional $f_\mathcal{A}$, whose $j$'th differential $d^jf_\mathcal{A}|_Q$  at $Q$ is a $j$-linear operator into $\mathbb{R}$, also known as a $j$-linear form.
Note that $\mathbb{M}(n,\mathbb{F})$ is a finite-dimensional Hilbert space over the $\mathbb{F}$, where the scalar product is induced by the Frobenius norm, i.e.~$\left\langle A,B\right\rangle=\sum_{n_1,n_2}A_{n_1,n_2}\overline{B_{n_1,n_2}}$. However, we note that $\mathbb{M}(n,\mathbb{F})$ can also be seen as a Hilbert space over the reals with the scalar product $\left\langle A,B\right\rangle_{\mathbb{R}}:=\mathsf{Re}\left\langle A,B\right\rangle$.
 As before, we shall abbreviate $d^jf_{\mathcal{A}}|_Q(Z,\ldots,Z)$ by $d^jf_{\mathcal{A}}|_Q(Z)$ and moreover we adopt the shorthand notation $h_k\coloneq h_{A_k}$. 

\begin{proposition}\label{fdiff}
	We have
	$$d^j f_\mathcal{A}|_Q(Z) = \frac{1}{2}\sum_{k=1}^K \sum_{l=0}^j {j \choose l} \left\langle d^{j-l}h_k|_Q(Z),J\diamond d^lh_k|_Q(Z)\right\rangle_{\mathbb{R}}.$$ In particular,
	\begin{align}
		df_{\mathcal{A}}|_Q(Z) =& \sum_{k=1}^K \left\langle  [h_k(Q),Q^{-1}Z],J\diamond h_k(Q)\right\rangle_{\mathbb{R}} \label{d1}\\
		d^2f_{\mathcal{A}}|_Q(Z) =& \sum_{k=1}^K \|J\diamond [h_k(Q),Q^{-1}Z]\|^2+\label{d2}\\&2\left\langle [Q^{-1}Z,Q^{-1}Zh_k(Q)],J\diamond h_k(Q)\right\rangle_{\mathbb{R}}\nonumber 
	\end{align}
\end{proposition}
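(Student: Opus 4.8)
The plan is to compute the $j$'th differential of $f_\mathcal{A}$ by differentiating through its expression as a sum of inner products of the similarity-transformed matrices, then specialize to $j=1,2$ using Lemma \ref{l1}. First I would write $f_\mathcal{A}(Q) = \frac{1}{2}\sum_k \langle J\diamond h_k(Q), J\diamond h_k(Q)\rangle_{\mathbb{R}}$, noting that since $J\diamond(\cdot)$ is $\mathbb{R}$-linear and self-adjoint for the real inner product, and $\langle\cdot,\cdot\rangle_{\mathbb{R}}$ is a real bilinear (symmetric) form, each summand is a quadratic form composed with the smooth map $h_k$. The key analytic tool is the general Leibniz rule for higher differentials of a bilinear pairing: if $g(Q) = B(u(Q), v(Q))$ for a fixed bounded bilinear $B$ and smooth $u,v$, then on the diagonal $d^j g|_Q(Z) = \sum_{l=0}^j \binom{j}{l} B\big(d^{j-l}u|_Q(Z), d^l v|_Q(Z)\big)$. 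This follows by taking the $j$'th order Taylor expansion of $g(Q+Z)$, substituting the Taylor expansions of $u$ and $v$, and collecting the degree-$j$ term — exactly the same uniqueness-of-Taylor-polynomials argument used in Lemma \ref{l1}.

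Applying this with $B(X,Y) = \langle X, J\diamond Y\rangle_{\mathbb{R}}$ (or equivalently $\langle J\diamond X, J\diamond Y\rangle_{\mathbb{R}}$, using idempotency of Hadamard multiplication by $J$ and self-adjointness), $u = v = h_k$, and summing over $k$ with the prefactor $\frac12$ gives precisely the claimed formula for $d^j f_\mathcal{A}|_Q(Z)$. For the first derivative, I set $j=1$: the sum has terms $l=0$ and $l=1$, which by symmetry of the real pairing both equal $\langle d^1 h_k|_Q(Z), J\diamond h_k(Q)\rangle_{\mathbb{R}}$; the two copies cancel the $\frac12$, and then Lemma \ref{l1} gives $d^1 h_k|_Q(Z) = -[Q^{-1}Z, h_k(Q)] = [h_k(Q), Q^{-1}Z]$, yielding \eqref{d1}. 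For the second derivative, $j=2$ produces three terms: the $l=1$ term contributes $\frac12\cdot 2\cdot\|J\diamond d^1 h_k|_Q(Z)\|^2 = \|J\diamond[h_k(Q),Q^{-1}Z]\|^2$ (here one uses that $\langle J\diamond X, J\diamond X\rangle_{\mathbb{R}} = \|J\diamond X\|^2$ since the entries, though possibly complex, contribute their squared moduli and the real part of a nonnegative real number is itself), while the $l=0$ and $l=2$ terms each give $\frac12\langle d^2 h_k|_Q(Z), J\diamond h_k(Q)\rangle_{\mathbb{R}}$, and Lemma \ref{l1} with $j=2$ gives $d^2 h_k|_Q(Z) = 2[Q^{-1}Z, Q^{-1}Z\, h_k(Q)]$, producing the $2\langle[Q^{-1}Z,Q^{-1}Z h_k(Q)], J\diamond h_k(Q)\rangle_{\mathbb{R}}$ term in \eqref{d2}.

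The main obstacle I anticipate is bookkeeping rather than conceptual: one must be careful that $d^l h_k|_Q$ is a genuinely symmetric $l$-linear operator so that evaluating on the diagonal determines it (already established in the lead-up to Lemma \ref{l1}), and that the Leibniz expansion is applied to the \emph{full} multilinear operators before restricting to the diagonal — the binomial coefficients $\binom{j}{l}$ count the ways of distributing the $j$ diagonal copies of $Z$ among $u$ and $v$. A secondary subtlety is the consistent use of $\langle\cdot,\cdot\rangle_{\mathbb{R}} = \mathsf{Re}\langle\cdot,\cdot\rangle$: since $f_\mathcal{A}$ is real-valued, all its differentials are real-valued $\mathbb{R}$-multilinear forms, and one should verify that $\mathsf{Re}$ commutes through the Leibniz expansion, which it does because $\mathsf{Re}$ is $\mathbb{R}$-linear. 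One should also double check that $J\diamond(\cdot)$ is self-adjoint with respect to $\langle\cdot,\cdot\rangle_{\mathbb{R}}$ (equivalently with respect to $\langle\cdot,\cdot\rangle$), which is immediate since $J$ has real entries, so $\langle J\diamond X, Y\rangle = \langle X, J\diamond Y\rangle$; this justifies rewriting $\|J\diamond h_k\|^2 = \langle h_k, J\diamond h_k\rangle$ and hence the stated form of the general-$j$ expression. With these points handled, the computation is a direct substitution.
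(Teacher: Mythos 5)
Your proposal is correct and follows essentially the same route as the paper: the "Leibniz rule for higher differentials of a bilinear pairing" you invoke is precisely the double Taylor expansion and degree-collection that the paper carries out explicitly, and your specialization to $j=1,2$ matches the paper's. The auxiliary observations (idempotency and self-adjointness of $J\diamond$, symmetry of $\langle\cdot,\cdot\rangle_{\mathbb{R}}$, uniqueness of Taylor polynomials) are the same ingredients the paper relies on.
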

\begin{proof}
 Using the representation
	$$f_\mathcal{A}(Q) = \frac{1}{2}\sum_{k=1}^K \|J\diamond h_k(Q)\|^2 = \frac{1}{2}\sum_{k=1}^K \left\langle h_k(Q),J\diamond h_k(Q)\right\rangle_{\mathbb{R}},$$
	and Lemma \ref{l1} we have, for fixed $N\geq 1$, that
	\begin{align*}
		f_\mathcal{A}(Q+Z) &= \sum_{k=1}^K \frac{1}{2}\|J\diamond h_k(Q+Z)\|^2\\
		&= \sum_{k=1}^K \frac{1}{2}\left\langle h_k(Q+Z),J\diamond h_k(Q+Z)\right\rangle_{\mathbb{R}} \\
		&= \sum_{k=1}^K \frac{1}{2} \sum_{j,m=0}^N \frac{1}{j!\,m!}\left\langle d^j h_k|_Q(Z),J\diamond d^mh_k|_Q(Z)\right\rangle_{\mathbb{R}} +\mathcal{O}(\|Z\|^{N+1})\\
		&=  \frac{1}{2} \sum_{j=0}^N \frac{1}{j!} \sum_{k=1}^K\sum_{l=0}^j \frac{j!}{l!(j-l)!} \left\langle d^{j-l}h_k|_Q(Z),J\diamond d^lh_k|_Q(Z)\right\rangle_{\mathbb{R}} + \mathcal{O}(\|Z\|^{N+1}),
	\end{align*}
	where we only sum up to $N$ in the last expression since all other terms are of order $\geq N+1$ and are thus included in the ordo-term. As in Lemma \ref{l1} we have obtained the Taylor expansion of $f_\mathcal{A}$ from which we can read out the differentials. The first and second order formulas now follow immediately by combining the general formula with Lemma \ref{l1}.
\end{proof}

The above expressions give the derivatives of $f_\mathcal{A}$ as multilinear forms, but for algorithmic purposes it is more desirable to have them in standard form as the gradient and Hessian, which we show how to retrieve in the coming sections.

\section{The gradient}\label{sec5}

We remind the reader that the gradient $Y$ of a function $g:\mathbb{M}(n,\mathbb{R})\rightarrow\mathbb{R}$ at some point $X$ in a real Hilbert space is uniquely defined as the element in $\mathbb{M}(n,\mathbb{R})$ such that \begin{equation}\label{1}g(X+Z)=g(X)+\left\langle Y,Z\right\rangle_{\mathbb{R}}+o(\|Z\|)\end{equation}
for $Z$ in a neighborhood of 0. 

To treat $\mathbb{M}(n,\mathbb{C})$, we stress again that this also can be seen as a finite dimensional Hilbert space over the reals with real scalar product $\left\langle Y,Z\right\rangle_{\mathbb{R}}=\mathsf{Re}\left\langle Y,Z\right\rangle$.
Therefore, given $g:\mathbb{M}(n,\mathbb{F})\rightarrow\mathbb{R}$ we have that its gradient, if it exists, is the unique matrix $Y\in \mathbb{M}(n,\mathbb{F})$ that satisfies 
$$g(X+Z)=g(X)+\left\langle Y,Z\right\rangle_{\mathbb{R}}+o(\|Z\|).$$
For easier readability we introduce the notation $Q^{-*}\coloneq (Q^{-1})^*$.
\begin{theorem}\label{t1}
Fix $Q$ and set $D_k=Q^{-1}A_k Q$. Then $$\nabla f_\mathcal{A}|_Q=\sum_{k=1}^K {Q^{-*}}\big[D_k^*,D_k\diamond J\big].$$
\end{theorem}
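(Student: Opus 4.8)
The plan is to read the gradient directly off the first-order differential \eqref{d1} supplied by Proposition \ref{fdiff}. By the defining property \eqref{1}, $\nabla f_\mathcal{A}|_Q$ is the \emph{unique} matrix $Y\in\mathbb{M}(n,\mathbb{F})$ with $df_\mathcal{A}|_Q(Z)=\langle Y,Z\rangle_\mathbb{R}$ for all $Z$, so the entire task is to rewrite the right-hand side of \eqref{d1} in the form $\langle Y,Z\rangle_\mathbb{R}$ and identify $Y$.

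First I would substitute $D_k=h_k(Q)=Q^{-1}A_kQ$ into \eqref{d1} and expand the commutator, obtaining
\[
df_\mathcal{A}|_Q(Z)=\sum_{k=1}^K\Big(\big\langle D_kQ^{-1}Z,\,J\diamond D_k\big\rangle_\mathbb{R}-\big\langle Q^{-1}ZD_k,\,J\diamond D_k\big\rangle_\mathbb{R}\Big).
\]
Then I would push the variable $Z$ into the right slot of the inner product using the elementary adjoint identities $\langle MZ,C\rangle_\mathbb{R}=\langle Z,M^*C\rangle_\mathbb{R}$ and $\langle ZN,C\rangle_\mathbb{R}=\langle Z,CN^*\rangle_\mathbb{R}$, valid for arbitrary fixed matrices $M,N,C$ by cyclicity of the trace (and stable under taking real parts, so they apply verbatim when $\mathbb{F}=\mathbb{C}$). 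Factoring $Q^{-1}$ out of $Q^{-1}Z$ in the same way yields
\[
df_\mathcal{A}|_Q(Z)=\sum_{k=1}^K\Big\langle Z,\,Q^{-*}D_k^*(J\diamond D_k)-Q^{-*}(J\diamond D_k)D_k^*\Big\rangle_\mathbb{R}=\Big\langle Z,\,\sum_{k=1}^K Q^{-*}\big[D_k^*,\,D_k\diamond J\big]\Big\rangle_\mathbb{R},
\]
where I used commutativity of the Hadamard product ($J\diamond D_k=D_k\diamond J$). Since $\langle\cdot,\cdot\rangle_\mathbb{R}$ is symmetric, this exhibits $\sum_k Q^{-*}[D_k^*,D_k\diamond J]$ as a matrix with the defining property \eqref{1}, and uniqueness of the gradient finishes the proof.

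The argument is essentially bookkeeping, so I do not expect a genuine obstacle; the only points requiring a little care are keeping the adjoints straight in the complex case and noting that the fixed real matrix $J$ simply rides along unchanged through the Hadamard factor in every step. As a sanity check one can verify $\langle Y,Z\rangle_\mathbb{R}=df_\mathcal{A}|_Q(Z)$ entrywise against the original sum $\sum_k\sum_{i\neq j}$ defining $f_\mathcal{A}$.
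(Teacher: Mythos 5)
Your proposal is correct and follows essentially the same route as the paper: both read the gradient off the first-order differential \eqref{d1} from Proposition \ref{fdiff} and move the adjoints so that $Z$ appears alone in one slot of the real inner product, then invoke uniqueness. The only cosmetic difference is that you expand the commutator before applying the adjoint identities whereas the paper applies $\langle [A,W],C\rangle_\mathbb{R}=\langle W,[A^*,C]\rangle_\mathbb{R}$ in a single step.
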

\begin{proof}
Note that, for any matrices $A,B,C\in \mathbb{M}(n,\mathbb{F})$ we have that $\left\langle AB,C \right\rangle = \left\langle A,B^*C \right\rangle=\left\langle B,A^*C \right\rangle$ and thus the same formula is valid also for the corresponding real part. This in combination with Proposition \ref{fdiff} gives
	\begin{align*}
		f_\mathcal{A}(Q+Z) &= f_\mathcal{A}(Q) + df_\mathcal{A}|_Q(Z) + \mathcal{O}(\|Z\|^2)\\
		&=  f_\mathcal{A}(Q) + \sum_{k=1}^K \left\langle  [h_k(Q),Q^{-1}Z],J\diamond h_k(Q)\right\rangle_{\mathbb{R}} + \mathcal{O}(\|Z\|^2)\\
		&=  f_\mathcal{A}(Q) + \sum_{k=1}^K  \left\langle  Q^{-1}Z,[h_k(Q)^*,J\diamond h_k(Q)]\right\rangle_{\mathbb{R}} + \mathcal{O}(\|Z\|^2)\\
		&= f_\mathcal{A}(Q) + \sum_{k=1}^K  \left\langle  Z,{Q^{-*}}[h_k(Q)^*,J\diamond h_k(Q)]\right\rangle_{\mathbb{R}} + \mathcal{O}(\|Z\|^2)\\
		&= f_\mathcal{A}(Q) +  \left\langle Z,\sum_{k=1}^K {Q^{-*}}[D_k^*,J \diamond D_k]\right\rangle_{\mathbb{R}} + \mathcal{O}(\|Z\|^2),
	\end{align*}
which was to be shown, since $D_k=h_k(Q)$.
\end{proof}

A few comments. First we note that since $f_\mathcal{A}$ is homogenous of degree 0, we always have $\nabla f_{\mathcal{A}}|_Q\perp Q$. This in particular means that for any gradient based optimization scheme giving rise to a sequence of points $(Q_{m})_{m=1}^\infty$, the normalized sequence $(Q_{m}/\|Q_m\|)_{m=1}^\infty$ has the same functional values and can be viewed as the retraction of the original sequence to the unit sphere in $\mathbb{M}(n,\mathbb{F})$. Since the latter is compact, this is useful for proving convergence results.

Moreover, note that if we let $\mathcal{D}$ denote the tuple $(D_1,\ldots,D_K)$, then $f_{\mathcal{A}}(Q+Z)=f_{\mathcal{D}}(I+Q^{-1}Z).$ From this observation it is straight-forward to prove that \begin{equation}\label{der}\nabla f_{\mathcal{A}}|_Q=Q^{-*}\nabla f_{\mathcal{D}}|_I,\end{equation}
which also can be verified by the formula in Theorem \ref{t1}. From a theoretical perspective, this fact is maybe not overly interesting, but the different viewpoints have a great impact on algorithmic performance, see \cite{ctw2024joint} for more details.

The formula for the gradient is not new, it has also been computed in the case when $Q$ is restricted to various manifolds \cite{absil2006joint,afsari2004some}, and for a general matrix Lie group in \cite{hori1999joint}. 
However, it seems that this has gone by mainly unnoticed, since none of the existing algorithms for joint diagonalization makes use of the gradient (again, see \cite{ctw2024joint}). 
Moreover, the expression for the Hessian has not been obtained in any of the previous works on the topic, and will be established by the results in the next section. 

\section{The Hessian}\label{sec6}
We now tackle the Hessian. Let $g$ be some (sufficiently smooth) function on a real Hilbert space $\mathcal{X}$ with values in $\R$. One way to look at the Hessian of $g$ at $Q$ is to consider it as a bilinear form $\mathcal{H}|_Q:\mathcal{X}^2\rightarrow \mathbb{R}$ such that \begin{equation}\label{2}g(Q+Z)=g(Q)+\left\langle \nabla f_{\mathcal{A}}|_Q,Z\right\rangle+\frac{1}{2}\mathcal{H}|_Q(Z,Z)+o(\|Z\|^2).\end{equation}
There are many bilinear forms that fulfill the above criterion, but only one if we add the requirement that $\mathcal{H}|_Q$ be symmetric, i.e.~that \begin{equation}\label{herm}
	\mathcal{H}(Z,W)={\mathcal{H}(W,Z)}.\end{equation}

However, just as the gradient of a function $g$ on any Hilbert space $\mathcal{X}$ is an element of $\mathcal{X}$ itself, the Hessian is usually realized as a matrix.
In the more abstract setting 
%or 
of
real Hilbert spaces, this matrix corresponds to a symmetric linear operator $H|_Q:\mathcal{X}\rightarrow \mathcal{X}$ such that $\mathcal{H}|_Q(Z,W)=\left\langle H|_Q(Z),W\right\rangle$. Indeed, if $\mathcal{X}=\mathbb{R}^n$ then the matrix representation of  $H|_Q$ is the standard Hessian. Thus, if we want to find an expression for the Hessian but want to avoid introducing a basis in the space of real $n\times n$ matrices (and corresponding $(n^2)\times(n^2)$ matrix-realization of the Hessian), all we need to do is to find a linear operator $H|_Q$ on $\mathcal{X}$ such that \begin{equation}\label{3}g(Q+Z)=g(Q)+\left\langle \nabla g|_Q,Z\right\rangle+\frac{1}{2}\left\langle H|_Q(Z),Z\right\rangle+o(\|Z\|^2)\end{equation}
and $\left\langle H|_Q(Z),W\right\rangle={\left\langle H|_Q(W),Z\right\rangle}$ holds for all matrices $Z,W$. 

In the setting of $\mathcal{X}=\mathbb{M}(n,\mathbb{R})$ and $\mathcal{X}=\mathbb{M}(n,\mathbb{C})$ (regarded as a space over the reals), \eqref{3} becomes
\begin{equation}\label{33}g(Q+Z)=g(Q)+\left\langle \nabla g|_Q,Z\right\rangle_{\mathbb{R}}+\frac{1}{2}\left\langle H|_Q(Z),Z\right\rangle_{\mathbb{R}}+o(\|Z\|^2)\end{equation}
where now $H|_Q:\mathbb{M}(n,\C)\rightarrow \mathbb{M}(n,\C)$ is an operator that is linear \textit{only over the reals} and \begin{equation}\label{5}\left\langle H|_Q(Z),W\right\rangle_{\mathbb{R}}={\left\langle H|_Q(W),Z\right\rangle_{\mathbb{R}}}\end{equation} holds for all matrices $Z,W$.

We remark that both (complex) linear and conjugate linear operators over $\C$ in $\mathbb{M}(n,\mathbb{C})$ give rise to real linear operators when we identify $\mathbb{M}(n,\mathbb{C})$ with $\mathbb{M}(n,\mathbb{R})\times \mathbb{M}(n,\mathbb{R})$ in the obvious way, (i.e.~we identify a complex matrix $X+iY$ with the pair $(X,Y)$, where $X,Y\in\mathbb{M}(n,\mathbb{R})$). As mentioned before, upon introducing a basis in $\mathbb{M}(n,\mathbb{R})$, all real linear operators on $\mathbb{M}(n,\mathbb{R})$ can then be identified as $(n^2)\times (n^2)$ matrices. However, it is worth pointing out that, using the same basis in $\mathbb{M}(n,\mathbb{C})$ we naturally get an identification with $\mathbb{R}^{2n^2}$, where the real part corresponds to the first half and the imaginary part to the latter. In such a basis, real linear operators can then be realized as $(2n^2)\times (2n^2)$-matrices, and it is easy to see that an operator is complex linear if and only if this matrix expression has the form $\left(
\begin{array}{cc}
	A & -B \\
	B & A \\
\end{array}
\right)
$ whereas it is conjugate linear if and only if it has a matrix representation of the form $\left(
\begin{array}{cc}
	A & B \\
	B & -A \\
\end{array}
\right)
$. Moreover,  we see that a symmetric complex linear operator necessarily satisfies $A^T=A$ and $B^T=-B$ whereas a symmetric conjugate linear operator satisfies $A^T=A$ and $B^T=B$. 
It is interesting to note that the below expression for the Hessian is a combination of both complex and conjugate linear operators, in the complex case.

\begin{theorem}\label{t2}
Fix $Q$ and set $D_k=Q^{-1}A_k Q$. Then the Hessian operator at $Q$ is given by \begin{align*}H|_Q(Z)=\sum_{k=1}^K {Q^{-*}}\Big(&\big[D_k^*,J\diamond [D_k,Q^{-1}Z]\big]+\\ &\big[J\diamond D_k,(Q^{-1}Z)^*\big]D_k^* +\left[J\diamond D_k,\left(Q^{-1}ZD_k\right)^*\right]\Big)\end{align*}
\end{theorem}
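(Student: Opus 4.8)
The plan is to start from the second-order differential formula \eqref{d2} in Proposition \ref{fdiff}, namely
\[
d^2 f_{\mathcal{A}}|_Q(Z) = \sum_{k=1}^K \|J\diamond[D_k,Q^{-1}Z]\|^2 + 2\left\langle [Q^{-1}Z,Q^{-1}Z D_k],\, J\diamond D_k\right\rangle_{\mathbb{R}},
\]
where I have already substituted $D_k = h_k(Q)$. By comparing \eqref{33} with the Taylor expansion, the Hessian operator $H|_Q$ is characterized by $\langle H|_Q(Z),Z\rangle_{\mathbb{R}} = d^2 f_{\mathcal{A}}|_Q(Z)$ together with the symmetry \eqref{5}. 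So the whole task is to rewrite the right-hand side above in the form $\langle (\text{something linear in } Z),\, Z\rangle_{\mathbb{R}}$ and read off the operator, then check symmetry.

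First I would handle the quadratic term $\|J\diamond[D_k,Q^{-1}Z]\|^2 = \langle J\diamond[D_k,Q^{-1}Z],\, J\diamond[D_k,Q^{-1}Z]\rangle_{\mathbb{R}}$. Using that $J\diamond$ is self-adjoint for the real inner product and idempotent ($J\diamond J\diamond = J\diamond$), this equals $\langle [D_k,Q^{-1}Z],\, J\diamond[D_k,Q^{-1}Z]\rangle_{\mathbb{R}}$. Now I peel $Q^{-1}Z$ off using the adjoint identities $\langle XY,C\rangle = \langle Y, X^*C\rangle$ and $\langle YX,C\rangle = \langle Y, CX^*\rangle$ (valid for the real part as noted in the proof of Theorem \ref{t1}): writing $[D_k,Q^{-1}Z] = D_k Q^{-1}Z - Q^{-1}Z D_k$, each of the four resulting pairings against $J\diamond[D_k,Q^{-1}Z]$ becomes $\langle Q^{-1}Z,\, (\cdots)\rangle_{\mathbb{R}}$, and collecting the four terms gives exactly the commutator $\langle Q^{-1}Z,\, [D_k^*,\, J\diamond[D_k,Q^{-1}Z]]\rangle_{\mathbb{R}}$. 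A final move of $Q^{-1}$ to the other slot via $\langle Q^{-1}Z,W\rangle_{\mathbb{R}} = \langle Z, Q^{-*}W\rangle_{\mathbb{R}}$ produces the first summand $Q^{-*}[D_k^*, J\diamond[D_k,Q^{-1}Z]]$ of the claimed operator.

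Next I would treat the cross term $2\langle[Q^{-1}Z,\, Q^{-1}Z D_k],\, J\diamond D_k\rangle_{\mathbb{R}}$. Here $Q^{-1}Z$ appears twice, which is the crux of the conjugate-linear contribution: expanding the commutator gives $Q^{-1}Z\cdot Q^{-1}Z D_k - Q^{-1}Z D_k\cdot Q^{-1}Z$, and in each monomial I want to strip one copy of $Q^{-1}Z$ to the outside as a free variable while the other copy stays inside. Stripping the \emph{left} factor in the first monomial and using the adjoint identities yields a term of the shape $\langle Q^{-1}Z,\, (J\diamond D_k)(Q^{-1}Z D_k)^*\rangle_{\mathbb{R}}$, and handling the second monomial similarly produces $\langle Q^{-1}Z,\, -(Q^{-1}Z)^* (J\diamond D_k)\, D_k^* \cdot(-1)\rangle_{\mathbb{R}}$-type pieces; careful bookkeeping of signs and of which side each surviving $Q^{-1}Z$ and its adjoint land on should collapse the factor $2$ against the two choices of which copy to strip, leaving precisely $Q^{-*}\big([J\diamond D_k,(Q^{-1}Z)^*]D_k^* + [J\diamond D_k,(Q^{-1}ZD_k)^*]\big)$. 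Adding this to the quadratic contribution gives the stated $H|_Q(Z)$. Finally, I would verify \eqref{5}, i.e. $\langle H|_Q(Z),W\rangle_{\mathbb{R}} = \langle H|_Q(W),Z\rangle_{\mathbb{R}}$; the cleanest route is to observe that by construction $\langle H|_Q(Z),Z\rangle_{\mathbb{R}}$ equals the symmetric bilinear form obtained by polarizing $d^2 f_{\mathcal{A}}|_Q$, so symmetry is automatic, but I would also note it can be checked directly using the adjoint identities and the self-adjointness of $J\diamond$.

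The main obstacle I anticipate is the cross term: because $Z$ enters quadratically through $Q^{-1}Z$, there is genuine ambiguity in how to split it, and it is exactly this that forces a conjugate-linear piece (the appearance of $(Q^{-1}Z)^*$ rather than $Q^{-1}Z$ in the answer). Getting the signs right when moving factors across the commutators and across the real inner product, and making sure the factor of $2$ is correctly absorbed by symmetrizing over the two copies of $Q^{-1}Z$, is where the calculation is delicate; everything else is a routine application of the adjoint identities already used for the gradient.
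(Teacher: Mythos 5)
Your plan is correct and mirrors the paper's proof: start from \eqref{d2}, peel $Q^{-1}Z$ off the quadratic term to get the first (complex-linear, automatically symmetric) piece, and symmetrize the cross term. The paper implements your ``strip one copy, then the other, absorbing the factor $2$'' step by first isolating the outer $Q^{-1}Z$ to form a conjugate-linear operator $N_k$, then computing its adjoint $N_k^*$ and taking $N_k + N_k^*$ --- which is exactly the polarization you describe, just packaged as an explicit adjoint computation.
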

\begin{proof}
From \eqref{d2} we already know that
$$\left\langle Z,H|_Q(Z)\right\rangle_{\mathbb{R}} = \sum_{k=1}^K \|J\diamond [h_k(Q),Q^{-1}Z]\|^2+2\left\langle [Q^{-1}Z,Q^{-1}Zh_k(Q)],J\diamond h_k(Q)\right\rangle_{\mathbb{R}}.$$
To obtain the Hessian operator giving rise to this, we first note that 
\begin{align*}
	\|J\diamond [h_k(Q),Q^{-1}Z]\|^2 &= \left\langle [h_k(Q),Q^{-1}Z],J\diamond [h_k(Q),Q^{-1}Z] \right\rangle_{\mathbb{R}} \\
	&= \left\langle Q^{-1}Z,\big[h_k(Q)^*,J\diamond [h_k(Q),Q^{-1}Z]\big] \right\rangle_{\mathbb{R}} \\
	&= \left\langle Z,{Q^{-*}}[h_k(Q)^*,J\diamond \big[h_k(Q),Q^{-1}Z]\big]\right\rangle_{\mathbb{R}},
\end{align*}
and it is easy to see that the operator $M_k(Z)\coloneq Q^{-1^*}\big[h_k(Q)^*,J\diamond [h_k(Q),Q^{-1}Z]\big]$ is symmetric, by calculations similar to the above. For the second term we get
\begin{align*}
	\left\langle [Q^{-1}Z,Q^{-1}Zh_k(Q)],J\diamond h_k(Q)\right\rangle_{\mathbb{R}} &= \left\langle Q^{-1}Z,\left[J\diamond h_k(Q),\left(Q^{-1}Zh_k(Q)\right)^*\right]\right\rangle_{\mathbb{R}}\\
	&= \left\langle Z,{Q^{-*}}\left[J\diamond h_k(Q),\left(Q^{-1}Zh_k(Q)\right)^*\right]\right\rangle_{\mathbb{R}}.
\end{align*}
The operator $N_k(Z)\coloneq {Q^{-*}}\left[J\diamond h_k(Q),\left(Q^{-1}Zh_k(Q)\right)^*\right]$ is conjugate linear. Its adjoint can be computed as 
\begin{align*}
	\left\langle X,N_k(Y) \right\rangle_{\mathbb{R}} &=\left\langle X, {Q^{-*}}\left[J\diamond h_k(Q),\left(Q^{-1}Yh_k(Q)\right)^*\right]\right\rangle_{\mathbb{R}}\\
	&= \left\langle h_k(Q)\left[Q^{-1}X,(J\diamond h_k(Q))^*\right]Q^{-1},Y^*\right\rangle_{\mathbb{R}} \\
	&= \left\langle Q^{-*}\left[J\diamond h_k(Q),(Q^{-1}X)^*\right]h_k(Q)^*,Y\right\rangle_{\mathbb{R}} \\
	&=\left\langle N_k^*(X),Y\right\rangle_{\mathbb{R}}
\end{align*}
By adding together this operator and its adjoint we obtain a symmetric conjugate linear operator
$$(N_k^*+N_k)(Z) = {Q^{-*}}[J\diamond h_k(Q),(Q^{-1}Z)^*]h_k(Q)^* + {Q^{-*}}[J\diamond h_k(Q),\left(Q^{-1}Zh_k(Q)\right)^*],$$
that satisfies
\begin{align*}
	&\left\langle Z,(N_k^*+N_k)(Z)\right\rangle_{\mathbb{R}}=2\left\langle [Q^{-1}Z,Q^{-1}Zh_k(Q)],J\diamond h_k(Q)\right\rangle_{\mathbb{R}}, 
\end{align*}
i.e.~it equals the second term in the first identity of the proof.
The Hessian (real linear) operator is thus given (uniquely) by 
$$H(Z) = \sum_{k=1}^K M_k(Z) + N_k^*(Z)+N_k(Z),$$ which written out equals the expression that was to be shown.
\end{proof}

\section{Remarks on the self-adjoint case}\label{sec7}

In this section we address the case in which the matrices in $\mathcal{A}$ are self-adjoint (i.e.~$A_k\in \mathbb{H}(n,\mathbb{F})$ for $k=1,\ldots, K$). 
It is then natural to only consider unitary matrices $Q$, i.e.~such that $Q^*=Q^{-1}$. The collection of such matrices is known as the unitary group (orthogonal group in the real case), which we shall denote by $\mathbb{U}(n,\mathbb{F})$. We therefore introduce $g_{\mathcal{A}}:\mathbb{U}(n,\mathbb{F})\rightarrow [0,\infty)$ by restricting $f_{\mathcal{A}}$ to $\mathbb{U}(n,\mathbb{F})$, and note that this is given by the simpler expression \begin{equation}\label{functionalSA}
g_{\mathcal{A}}(Q)=\frac{1}{2}\sum_{k=1}^K\sum_{i\neq j} \left|(Q^{*}A_kQ)_{ij}\right|^2. 
\end{equation} 
Our key result, Theorem \ref{diverg}, clearly applies to this case unaltered. The result from Section \ref{sec3} need to be modified though, but this is completely straight-forward. One first notes that $(\mathbb{H}(n,\mathbb{F}))^K$  is a linear space and hence we can equip it with a unique (up to a multiplicative constant) Lebesgue measure, which we can use to define absolutely continuous probability measures. Armed with this, it is straight forward to adopt Proposition \ref{probone}, we skip the details. Combined with Theorem \ref{diverg}, we thus have \begin{proposition}\label{probone2}
Let $\mathcal{P}$ be an absolutely continuous probability measure on 
$\big(\mathbb{H}(n,\mathbb{F}\HW{)}\big)^K$, 
and assume that $\mathcal{A}$ is drawn from the corresponding distribution. Then with probability one we have that $\lim_{Q\to Z}g_{\mathcal{A}}(Q) = \infty$ holds at all nonzero rank-deficient points $Z$. 
\end{proposition}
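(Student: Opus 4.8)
The plan is to mirror the argument for Proposition \ref{probone}, replacing the ambient space $\big(\mathbb{M}(n,\mathbb{F})\big)^K$ by $\big(\mathbb{H}(n,\mathbb{F})\big)^K$, and checking that each Lebesgue-measure-zero statement survives the restriction to self-adjoint matrices. First I would fix the setup: $\mathbb{H}(n,\mathbb{F})$ is a real vector space (of dimension $n^2$ when $\mathbb{F}=\mathbb{C}$ and $n(n+1)/2$ when $\mathbb{F}=\mathbb{R}$), so it carries a unique-up-to-scaling Haar/Lebesgue measure $\mu_{\mathbb H}$, and an absolutely continuous $\mathcal{P}$ on $\big(\mathbb{H}(n,\mathbb{F})\big)^K$ is by definition absolutely continuous with respect to the $K$-fold product of $\mu_{\mathbb H}$. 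By Theorem \ref{diverg}, the conclusion $\lim_{Q\to Z}g_{\mathcal{A}}(Q)=\infty$ at all nonzero rank-deficient $Z$ holds as soon as $\mathcal{A}$ has no common nontrivial invariant subspace (the restriction of $f_{\mathcal{A}}$ to $\mathbb{U}(n,\mathbb{F})$ only decreases the set of arguments, so divergence is inherited). Hence it suffices to show that the set of self-adjoint tuples with a common n.i.s.\ has $\mu_{\mathbb H}^{\otimes K}$-measure zero.

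The three ingredients to re-verify are: (i) for a fixed nontrivial subspace $\mathcal{Y}$, the set $\{A\in\mathbb{H}(n,\mathbb{F}):A\mathcal{Y}\subseteq\mathcal{Y}\}$ has $\mu_{\mathbb H}$-measure zero; (ii) a self-adjoint $A$ with distinct eigenvalues has only finitely many invariant subspaces; (iii) the set of $A\in\mathbb{H}(n,\mathbb{F})$ with a repeated eigenvalue has $\mu_{\mathbb H}$-measure zero. For (i), pick an orthonormal basis $v_1,\dots,v_n$ adapted to $\mathcal{Y}$ as in Proposition \ref{probone}; then $A\mathcal{Y}\subseteq\mathcal{Y}$ is the vanishing of the off-diagonal block $(v_{m+1}\cdots v_n)^*A(v_1\cdots v_m)$, which is a system of linear equations in the real coordinates of $A$ on $\mathbb{H}(n,\mathbb{F})$. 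One must check this system is nontrivial on $\mathbb{H}(n,\mathbb{F})$, not merely on $\mathbb{M}(n,\mathbb{F})$: but the Hermitian matrix $\tfrac12(v_1 v_{m+1}^* + v_{m+1}v_1^*)$ (or $\tfrac{i}{2}(v_1 v_{m+1}^*-v_{m+1}v_1^*)$ in the complex case) violates it, so the solution set is a proper affine subspace, hence $\mu_{\mathbb H}$-null. For (ii), a self-adjoint $A$ with distinct eigenvalues is orthogonally diagonalizable with one-dimensional eigenspaces, and its invariant subspaces are exactly the $2^n$ sums of eigenlines — finitely many, exactly as in the general case. For (iii), the characteristic polynomial argument of Lemma \ref{distincteig} goes through verbatim: $\det(S(p_A,p'_A))$ is a polynomial in the real coordinates of $A\in\mathbb{H}(n,\mathbb{F})$, it is not identically zero (e.g.\ a diagonal matrix with distinct entries lies in $\mathbb{H}(n,\mathbb{F})$ and makes it nonzero), so its zero set is $\mu_{\mathbb H}$-null; note that here $\mu_{\mathbb H}$ is genuinely a single real Lebesgue measure even when $\mathbb{F}=\mathbb{C}$, which is actually simpler than the complex case of Lemma \ref{distincteig}.

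With these in hand, the Tonelli/Fubini argument of Proposition \ref{probone} applies unchanged: writing $\mathcal{S}=\{(A,B)\in\mathbb{H}(n,\mathbb{F})^2: A,B$ share an n.i.s.$\}$, for $\mu_{\mathbb H}$-a.e.\ $A$ (those with distinct eigenvalues) the slice $\{B:(A,B)\in\mathcal{S}\}$ is a finite union of $\mu_{\mathbb H}$-null sets by (i) and (ii), so $(\mu_{\mathbb H}\times\mu_{\mathbb H})(\mathcal{S})=0$; and for $K\ge 3$ any common n.i.s.\ of the whole tuple is in particular shared by the pair $(A_1,A_2)$, reducing to the $K=2$ case. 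Absolute continuity of $\mathcal{P}$ then forces $\mathcal{P}$ of the bad set to be zero, and Theorem \ref{diverg} yields the claim. The main obstacle — and the only place where self-adjointness is not cosmetic — is ingredient (i): one has to exhibit an explicit Hermitian matrix violating the block-vanishing condition, to be sure the constraining linear system does not degenerate to the trivial one after intersecting with $\mathbb{H}(n,\mathbb{F})$. Everything else is a transcription of Section \ref{sec3} with $\mathbb{M}$ replaced by $\mathbb{H}$.
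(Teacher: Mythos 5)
Your proposal is correct and follows precisely the route the paper itself sketches but does not carry out (the paper explicitly states ``we skip the details of the proof''). You supply those details, and the single genuinely non-cosmetic step you flag — checking that the linear constraint for $A\mathcal{Y}\subseteq\mathcal{Y}$ remains nontrivial after intersecting with $\mathbb{H}(n,\mathbb{F})$, via the explicit Hermitian witness $\tfrac12(v_1 v_{m+1}^*+v_{m+1}v_1^*)$ — is exactly the right thing to verify, and everything else (Tonelli/Fubini, finiteness of invariant subspaces for distinct eigenvalues, the Sylvester discriminant argument with the simplification that the discriminant is a single real polynomial on $\mathbb{H}(n,\mathbb{F})$) transcribes from Section \ref{sec3} as you say.
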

We skip the details of the proof. The only remaining issue worth discussing is how to compute the gradient for this situation.
It is well known that the collection of unitary matrices is a differentiable manifold (see e.g.~\cite{gallier2020differential}, Ch. 2), whose tangent space at the identity equals the set of skew-symmetric matrices $\mathbb{S}(n,\mathbb{F})$, i.e.~matrices $T$ satisfying $T^*=-T$. It is then easy to see that $Q\mathbb{S}(n,\mathbb{F})$ equals the tangent space of $\mathbb{U}(n,\mathbb{F})$ at $Q$, and moreover the formula \eqref{der} in this case simplifies to 
\begin{equation}\label{der2}\nabla f_{\mathcal{A}}|_Q=Q \nabla f_{\mathcal{D}}|_I.\end{equation}
However, when $\mathcal{D}$ is self-adjoint (which it will be if $\mathcal{A}$ is self-adjoint and $Q$ is unitary), it luckily turns out that we already have $\nabla f_{\mathcal{D}}|_I\in\mathbb{S}(n,\mathbb{F})$, and hence we conclude that
\begin{proposition}
Let $\mathcal{A}$ be self-adjoint and $Q$ unitary. Then the gradient of $g_{\mathcal{A}}$, as an element in the tangent space of $\mathbb{U}(n,\mathbb{F})$ at $Q$, is given by
\begin{equation*}\nabla g_{\mathcal{A}}|_Q=Q \nabla f_{\mathcal{D}}|_I=Q\sum_{k=1}^K \big[D_k^*,D_k\diamond J\big].\end{equation*}
\end{proposition}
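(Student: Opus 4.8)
The plan is to realize $g_{\mathcal{A}}$ as the restriction of $f_{\mathcal{A}}$ to the embedded submanifold $\mathbb{U}(n,\mathbb{F})\subseteq GL(n,\mathbb{F})$ and to invoke the standard fact that the (Riemannian) gradient of such a restriction at $Q$ is the orthogonal projection, with respect to the real Frobenius inner product $\langle\cdot,\cdot\rangle_{\mathbb{R}}$, of the ambient gradient $\nabla f_{\mathcal{A}}|_Q$ onto the tangent space $T_Q\mathbb{U}(n,\mathbb{F})=Q\mathbb{S}(n,\mathbb{F})$. The bulk of the argument is then to show that this projection acts trivially, i.e.~that $\nabla f_{\mathcal{A}}|_Q$ is already tangent to $\mathbb{U}(n,\mathbb{F})$ at $Q$.

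First I would record the shape of the normal space: since left multiplication by the unitary matrix $Q$ is a real-orthogonal map on $\mathbb{M}(n,\mathbb{F})$ and $\mathbb{M}(n,\mathbb{F})=\mathbb{S}(n,\mathbb{F})\oplus\mathbb{H}(n,\mathbb{F})$ is an orthogonal splitting into skew-adjoint and self-adjoint parts, the orthogonal complement of $Q\mathbb{S}(n,\mathbb{F})$ is $Q\mathbb{H}(n,\mathbb{F})$, and the projection onto the tangent space is $QW\mapsto Q\tfrac12(W-W^{*})$. Next, using $Q^{-*}=Q$ in \eqref{der} (equivalently \eqref{der2}, or directly specializing Theorem \ref{t1}), I would write $\nabla f_{\mathcal{A}}|_Q = Q\,\nabla f_{\mathcal{D}}|_I = Q\sum_{k=1}^{K}[D_k^{*},D_k\diamond J]$ with $D_k=Q^{*}A_kQ$. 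Then I would verify that $W\coloneq\nabla f_{\mathcal{D}}|_I$ is skew-adjoint: as $A_k$ is self-adjoint and $Q$ unitary, each $D_k$ is self-adjoint; as $J$ is real and symmetric, $D_k\diamond J$ is self-adjoint; and for self-adjoint $X,Y$ one has $[X,Y]^{*}=[Y^{*},X^{*}]=[Y,X]=-[X,Y]$, so each summand, hence $W$, lies in $\mathbb{S}(n,\mathbb{F})$. Consequently $\nabla f_{\mathcal{A}}|_Q=QW\in Q\mathbb{S}(n,\mathbb{F})=T_Q\mathbb{U}(n,\mathbb{F})$, the projection restricts to the identity on it, and $\nabla g_{\mathcal{A}}|_Q=\nabla f_{\mathcal{A}}|_Q$, which is the asserted formula.

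The only genuine content is the tangency claim — the ``lucky'' coincidence noted just before the statement — and that is precisely what the self-adjointness bookkeeping of the previous paragraph delivers, so that is where I expect the care to be needed. One further point deserving attention is confirming that $g_{\mathcal{A}}$ carries the metric induced from $\langle\cdot,\cdot\rangle_{\mathbb{R}}$ on $\mathbb{M}(n,\mathbb{F})$, so that the restricted-gradient-equals-projection principle applies verbatim; this matters in the complex case, where $\mathbb{M}(n,\mathbb{C})$ must be regarded as a real Hilbert space. As a consistency check one may also note that homogeneity of degree $0$ already forces $\nabla f_{\mathcal{A}}|_Q\perp Q$, and indeed $Q=Q\cdot I$ lies in the normal space $Q\mathbb{H}(n,\mathbb{F})$, in agreement with the above.
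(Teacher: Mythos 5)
Your proposal is correct and follows essentially the same route as the paper: specialize $\nabla f_{\mathcal{A}}|_Q = Q^{-*}\nabla f_{\mathcal{D}}|_I$ to unitary $Q$ (so $Q^{-*}=Q$), note that $\nabla f_{\mathcal{D}}|_I=\sum_k [D_k^*,D_k\diamond J]$ is skew-adjoint because each $D_k$ and $D_k\diamond J$ is self-adjoint and the commutator of two self-adjoint matrices is skew-adjoint, conclude that $\nabla f_{\mathcal{A}}|_Q\in Q\mathbb{S}(n,\mathbb{F})=T_Q\mathbb{U}(n,\mathbb{F})$, so the tangential projection giving the Riemannian gradient of the restriction acts as the identity. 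The paper compresses this into the remark that it ``luckily turns out'' that $\nabla f_{\mathcal{D}}|_I\in\mathbb{S}(n,\mathbb{F})$; you have simply spelled out the projection-onto-tangent-space framework and the self-adjointness bookkeeping that the paper leaves implicit.
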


%By standard differential geometry, it follows that the gradient of $g_{\mathcal{A}}|_Q$ equals the projection of $\nabla f_{\mathcal{A}}|_Q$ onto this subspace. Since multiplication by a unitary $Q$ is an isometric isomorphism in $\mathbb{M}(n,\mathbb{C})$, formula \eqref{der2} implies that $\nabla g_{\mathcal{A}}|_Q$ equals $Q$ times the projection of $\nabla f_{\mathcal{D}}|_I$ onto $\mathbb{S}(n,\mathbb{F})$, which in turn is given by $\nabla g_{\mathcal{D}}|_{I}=\frac{\nabla f_{\mathcal{D}}|_{I}-(\nabla f_{\mathcal{D}}|_{I})^*}{2}.$ However, inserting 

In order to use the above gradient for algorithmic design on $\mathbb{U}(n,\mathbb{F})$, an obstacle arises from the fact that upon adding a multiple of $-\nabla g_{\mathcal{A}}|_{Q}$ to a matrix $Q$, as is done e.g.~in gradient descent, we inevitably  leave $\mathbb{U}(n,\mathbb{F})$. It is therefore relevant to recall the little known theorem by J.~Keller \cite{keller1975closest} which states that, given any matrix $Q\in GL(n,\mathbb{F})$, the unique closest point on $\mathbb{U}(n,\mathbb{F})$ is given by $\mathcal{R}(Q)=UV^*$ where $U\Sigma V^*$ is the standard singular value decomposition of $Q$. Thus, one may construct descent algorithms on $\mathbb{U}(n,\mathbb{F})$ by iterating, in the simplest case, $Q_{m+1}=\mathcal{R}(Q_{m}-\lambda_k \nabla g_{\mathcal{A}}|_{Q_m})$ for some step-length $\lambda_k$. We refer to \cite{absil2008optimization}, Ch.~2, for a deeper study and further references, of such algorithms on matrix manifolds.

\bibliographystyle{plain}
\bibliography{JEVD}

\begin{thebibliography}{10}

\bibitem{absil2006joint}
P.-A. Absil and K.A. Gallivan.
\newblock Joint diagonalization on the oblique manifold for independent
  component analysis.
\newblock In {\em Proc. Int. Conf. Acoustics Speech and Signal Process.
  (ICASSP)}, Toulouse, France, 2006.

\bibitem{absil2008optimization}
P.-A. Absil, R.~Mahony, and R.~Sepulchre.
\newblock {\em Optimization algorithms on matrix manifolds}.
\newblock Princeton University Press, 2008.

\bibitem{afsari2004some}
B.~Afsari and P.~S. Krishnaprasad.
\newblock Some gradient based joint diagonalization methods for {ICA}.
\newblock In C.~G. Puntonet and A.~Prieto, editors, {\em Independent Component
  Analysis and Blind Signal Separation}, pages 437--444, Berlin, Heidelberg,
  2004. Springer Berlin Heidelberg.

\bibitem{akritas1993sylvester}
A.~G. Akritas.
\newblock Sylvester's forgotten form of the resultant.
\newblock {\em Fibonacci Quart}, 31(4):325--332, 1993.

\bibitem{andersson2018esprit}
Fredrik Andersson and Marcus Carlsson.
\newblock Esprit for multidimensional general grids.
\newblock {\em SIAM J. Matrix Analysis and Applications}, 39(3):1470--1488,
  2018.

\bibitem{andersson2015method}
Fredrik Andersson, Marcus Carlsson, Jean-Yves Tourneret, and Herwig Wendt.
\newblock A method for 3d direction of arrival estimation for general arrays
  using multiple frequencies.
\newblock In {\em 2015 IEEE 6th International Workshop on Computational
  Advances in Multi-Sensor Adaptive Processing (CAMSAP)}, pages 325--328. IEEE,
  2015.

\bibitem{andre2023sparsity}
R.~Andr{\'e} and X.~Luciani.
\newblock Sparsity constraint implementation for the joint eigenvalue
  decomposition of matrices.
\newblock In {\em Proc. Int. Conf. Acoustics Speech and Signal Process.
  (ICASSP)}, Rhodes Island, Greece, 2023.

\bibitem{andre2020joint}
R.~Andr{\'e}, X.~Luciani, and E.~Moreau.
\newblock Joint eigenvalue decomposition algorithms based on first-order
  {T}aylor expansion.
\newblock {\em IEEE Trans. Signal Process.}, 68:1716--1727, 2020.

\bibitem{cardoso1993blind}
J.-F. Cardoso and A.~Souloumiac.
\newblock Blind beamforming for non-gaussian signals.
\newblock {\em IEE Proceedings F (Radar and Signal Processing)},
  140(6):362--370, 1993.

\bibitem{ctw2024joint}
M.~Carlsson, E.~Troedsson, and H.~Wendt.
\newblock On gradient based descent algorithms for joint diagonalization of
  matrices.
\newblock In {\em Proc. European Signal Process. Conf. (EUSIPCO)}, Lyon,
  France, 2024.

\bibitem{zerosetpol}
R.~Caron and T.~Traynor.
\newblock The zero set of a polynomial.
\newblock {\em WSMR Report}, pages 05--02, 2005.

\bibitem{cartan1983differential}
H.~Cartan.
\newblock {\em Differential Calculus}.
\newblock International studies in mathematics. Hermann, 1983.

\bibitem{comon2010handbook}
P.~Comon and C.~Jutten.
\newblock {\em Handbook of Blind Source Separation: Independent Component
  Analysis and Applications}.
\newblock Elsevier Science, 2010.

\bibitem{flury1984common}
B.~N. Flury.
\newblock Common principal components in k groups.
\newblock {\em Journal of the American Statistical Association}, 79(388):892 --
  898, 1984.

\bibitem{gallier2020differential}
J.~Gallier and J.~Quaintance.
\newblock {\em Differential geometry and lie groups}, volume~12.
\newblock Springer, 2020.

\bibitem{gong2012complex}
X.-F. Gong, K.~Wang, and Q.-H. Lin.
\newblock Complex non-orthogonal joint diagonalization with successive givens
  and hyperbolic rotations.
\newblock In {\em Proc. Int. Conf. Acoustics Speech and Signal Process.
  (ICASSP)}, Kyoto, Japan, 2012.

\bibitem{hori1999joint}
G.~Hori.
\newblock Joint diagonalization and matrix differential equations.
\newblock In {\em Proc. Int. Symp. Nonlinear Theory and Applications (NOLTA)},
  Hawaii, USA, 1999.

\bibitem{iferroudjene2009new}
R.~Iferroudjene, K.~A. Meraim, and A.~Belouchrani.
\newblock A new {J}acobi-like method for joint diagonalization of arbitrary
  non-defective matrices.
\newblock {\em Applied Math. and Comput.}, 211(2):363--373, 2009.

\bibitem{keller1975closest}
J.~B. Keller.
\newblock Closest unitary, orthogonal and hermitian operators to a given
  operator.
\newblock {\em Mathematics Magazine}, 48(4):192--197, 1975.

\bibitem{luciani2014canonical}
X.~Luciani and L.~Albera.
\newblock Canonical polyadic decomposition based on joint eigenvalue
  decomposition.
\newblock {\em Chemometrics and Intelligent Laboratory Systems}, 132:152--167,
  2014.

\bibitem{mesloub2018efficient}
A.~Mesloub, A.~Belouchrani, and K.~Abed-Meraim.
\newblock Efficient and stable joint eigenvalue decomposition based on
  generalized givens rotations.
\newblock In {\em Proc. European Signal Process. Conf. (EUSIPCO)}, Rome, Italy,
  2018.

\bibitem{sahnoun2017multidimensional}
Souleymen Sahnoun, Konstantin Usevich, and Pierre Comon.
\newblock Multidimensional esprit for damped and undamped signals: Algorithm,
  computations, and perturbation analysis.
\newblock {\em IEEE Transactions on Signal Processing}, 65(22):5897--5910,
  2017.

\end{thebibliography}

\end{document}